\tikzset{arrow data/.style 2 args={%
		decoration={
		markings,
			mark=at position #1 with \arrow{#2}},
		postaction=decorate}
}%
\newcommand{\R}{\mathbb{R}}
\newcommand{\N}{\mathbb{N}}
\DeclareUrlCommand\doi{\def\UrlLeft##1\UrlRight{doi:\href{http://dx.doi.org/##1}{##1}}\urlstyle{rm}}
\numberwithin{equation}{section}
\theoremstyle{plain}
\newtheorem{cor}{Corollary}
\theoremstyle{plain}
\newtheorem{thm}{Theorem}[section]
\newtheorem{pr}{Proposition}[section]
\theoremstyle{definition}
\newtheorem{remark}{Remark}[section]
\newtheorem{lm}{Lemma}[section]
\xpatchcmd{\proof}{\itshape}{\normalfont\proofnameformat}{}{}
\newcommand{\proofnameformat}{}
\renewcommand{\proofnameformat}{\bfseries}
\definecolor{lightgray}{gray}{0.9}
\journal{Journal}
\begin{document}	
	\begin{frontmatter}
		
		
		
		\title{Bifurcations of limit cycles in piecewise smooth Hamiltonian system with boundary perturbation}
		
		 \author[label1]{Nanasaheb Phatangare}
		 \address[label1]{Department of Mathematics, Fergusson College, Pune}
		 \author[label2]{Krishnat Masalkar}
		 \address[label2]{Department of Mathematics, Abasaheb Garware College, Pune}
		 \author[label3]{Subhash Kendre\corref{mycorrespondingauthor}}
		 \address[label3]{Department of Mathematics, Savitribai Phule Pune University, Pune}\cortext[mycorrespondingauthor]{Corresponding author}
		 
		
		
		
		\begin{abstract}
			In this paper, the general planar piecewise smooth Hamiltonian system with period annulus around the center at the origin  is considered. We obtain the expressions for the first order and the second order Melnikov functions of it's general second order perturbation, which can be used to find the number of limit cycles bifurcated from  periodic orbits.
			Further, we have shown that the number of limit cycles of the system   $\dot{X}=\begin{cases}
		(H_y^+,-H_x^+) & \mbox{if}~y>\varepsilon f(x)\\ (H_y^-,-H_x^-) & \mbox{if}~y<\varepsilon f(x)
		 \end{cases}$ equals to the number of positive zeros of $f$ when at $\varepsilon=0$ the system has a period annulus around the origin. 
		   \end{abstract}
		
		\begin{keyword}
		Hamiltonian,	Piecewise smooth system \sep Limit cycle \sep Melnikov function \sep Poincare Map.
			
			
		\end{keyword}
		
	\end{frontmatter}

	
	\section{Introduction}
	
	Motions of many nonsmooth processes such as impact switching, sliding and other discrete state transitions are modelled into piecewise smooth dynamical systems rather than the smooth dynamical systems. Recently piecewise smooth dynamical systems are of great interest. It has many applications in physical processes  such as electrical circuits, impact oscillators, dry friction oscillators, relay control systems, modelling of irregular heartbeats etc.\cite{bernardo2008piecewise}. In many scientific applications systems with self sustained oscillations are modelled where limit cycles plays an important role. Limit cycles bifurcations in case of smooth dynamical systems is very well studied, whereas the non-smooth systems have been studied recently. 
	\par Averaging theory, Melnikov theory and normal form theory are well known techniques used  to study the limit cycle bifurcation of planar smooth differential systems \cite{han2012normal, sanders2007averaging}, whereas the techniques for piecewise smooth systems are in the process of development \cite{du2008bifurcation,llibre2015averaging}. 
	\par  In \cite{de2013limit} authors considered a piecewise linear differential systems (PLDS) having center-focus type singularity with switching manifold $y=0$, in which limit cycle bifurcation of the system is studied when the switching manifold is $y=\varepsilon$. Also, in \cite{zou2018piecewise} C. Zou and J. Yang studied PLDS with saddle-centre type singularity at the origin and switching curve $y=b\sin x$, in which it has been shown that the number of limit cycles  bifurcated from the period annulus of the system with $b=0$ is equal to the number of positive zeros of $\sin x$. Note that the system  considered in \cite{zou2018piecewise} is symmetric about the $y$-axis and zeros of switching curve $y=b \sin x$ are also  symmetric about the $y$-axis. In \cite{zou2019piecewise} authors studied the same system as in \cite{zou2018piecewise} by considering the switching curve $y=bx(x^2-x_1^2)(x^2-x_2^2)\cdots (x^2-x_m^2)$, wherein it is proved that the number of limit cycles bifurcated from the period annulus of the system at $b=0$ is equal to $m$, where $m$ is a positive integer.	
	 In \cite{buzzi2014birth}, authors studied the number of limit cycles bifurcated from the origin of the perturbation of a planar piecewise smooth system  with centre-centre type singularity  at the origin. Further, in \cite{wei2016normal}, normal forms of some planar piecewise smooth systems  with center-center type singularity of order $(k,l)$ at the origin are considered and their limit cycles bifurcation from the origin under higher order perturbations have been studied. It is natural to think about the limit cycles bifurcation of these normal forms when the separation boundary is an analytic function. 
	In \cite{liu2010bifurcation} Xia Liu and Maoan Han considered the first order perturbation of a planar piecewise smooth Hamiltonian system. If the unperturbed system has a period annulus centered at the origin, then using the first order Melnikov function, the number of limit cycles bifurcated from the periodic annulus are studied.
	\par In this paper we have obtained the second order Melnikov function for the piecewise Hamiltonian system with second order perturbation and the separation boundary $y=0$. We also considered a general piecewise smooth Hamiltonian system with perturbed separation boundary $y=\varepsilon f(x)$ when $f$ is a $C^2$ function. 
  	 \par The paper is organized as follows: In \textit{Section 2} we give some perliminary concepts about Melnikov theory, limit cycles and stability of limit cycles. \textit{Section 3} is devoted to investigate the first order and second order Melnikov functions for piecewise smooth Hamiltonian systems with second order perturbation. \textit{Section 4} deals with the general piecewise smooth Hamiltonian system with boundaty perturbation. Finally, in \textit{Section 5}, we give some application of piecewise smooth Hamiltonian systems with boundary perturbation. 
\section{Preliminaries}
 Consider a $ C^{\infty}$ smooth system of the form \begin{align}\label{p121}\dot{X}=\begin{cases}
  H_y+\varepsilon f(x,y,\varepsilon, \delta)\\
  -H_x+\varepsilon g(x,y,\varepsilon, \delta)
  \end{cases},
  \end{align} where $H,f,g$ are $C^{\infty}$ smooth functions for $\varepsilon \in \R, \delta \in K\subset \R^m$ with $K$ compact and $H_x(x,y)=\dfrac{\partial H}{\partial x}(x,y), H_y(x,y)=\dfrac{\partial H}{\partial y}(x,y)$. For $\varepsilon =0$, the system (\ref{p121}) becomes Hamiltonian system \begin{align}\label{p122}
  \dot{X}=\begin{cases}
  H_y\\-H_x
  \end{cases}.
  \end{align} 
  Suppose that the system (\ref{p122}) has a period annulus $\displaystyle \mathcal{A }=\{\Gamma_h:H(x,y)=h,h\in (\alpha, \beta)\subset \R\}$ such that  $ \Gamma_h \rightarrow \Gamma_{\alpha}$ as $h\rightarrow \alpha,$ which is an elementary center point for the system and $ \Gamma_h \rightarrow \Gamma_{\beta}$ as $h\rightarrow \beta,$ which is usually a homoclinic loop consisting of a saddle point or heteroclinic loop containing two saddle points. For some $h_0\in (\alpha, \beta)$, consider a periodic orbit $\Gamma_{h_0}$ from the period annulus and a Poincare section $S=\{(a(h),0):h\in (h_0-\varepsilon_0,h_0+\varepsilon_0)\}$, for some $\varepsilon_0>0$, at the point $A(a(h_0),0)$ to $\Gamma_{h_0}$. Let $\Gamma_{h_0\varepsilon} $ be the solution of (\ref{p121}) starting at $A(a(h_0),0)$and let $B(b(h_0,\varepsilon, \delta),0)$ be its first point of intersection with the Poincare section. Then the Poincare map $\mathcal{P}$ maps $A(a(h_0),0)$  to $B(b(h_0,\varepsilon, \delta),0)$. Note that $H(A(a(h_0),0))=H(B(b(h_0,\varepsilon, \delta),0))$ if and only if $A(a(h_0),0)=B(b(h_0,\varepsilon, \delta),0)$. Therefore we can use difference map $H(B)-H(A)$ to investigate the number of limit cycles of (\ref{p121}) bifurcated from $\Gamma_{h_0}$.  Thus, 
\begin{align}\label{p123}
H(B)-H(A)
=&\int_{\widehat{AB}}dH
=\int_{\widehat{AB}}(H_xdx+H_ydy)\nonumber \\
=&\int_0^{\tau_{h_0}}[H_x(H_y+\varepsilon f)+H_y(-H_x+\varepsilon g)]dt  
=\varepsilon \int_0^{\tau_{h_0}}(H_x f+H_y g)dt\nonumber\\
=& \varepsilon F(h_0,\varepsilon ,\delta) 
= \sum_{k=1}^{\infty}M_k(h_0,\delta)\varepsilon ^k,
\end{align}
 where $\tau_{h_0}$ is the  time of flight along the trajectory $\widehat{AB}$ of (\ref{p121}) from $A$ to $B$ and 
\begin{align}
M_k(h_0,\delta)=\dfrac{1}{(k-1)!}\dfrac{\partial^{(k-1)} F}{\partial \varepsilon ^{k-1}}(h_0,0,\delta). \nonumber
\end{align}
Here, $M_k(h_0,\delta)$ is called as the $k$th order Melnikov function and $F$ is called as a bifurcation function for the system (\ref{p121}).
\par Clearly, from equation (\ref{p123}) we have 
\begin{align}
F(h_0,0,\delta)=M_1(h_0,\delta)=&\int_{\Gamma_{h_0}}(H_xf+H_yg)dt=\int_{\Gamma_{h_0}}(gdx-fdy)=-\int\int_{Int(\Gamma_{h_0})}(f_x+g_y)dxdy, \nonumber
\end{align} where $Int(\Gamma_{h_0})$ is the region bounded by $\Gamma_{h_0}$.
Here, we say that the cyclicity of $\Gamma_{h_0}$ is $k$ if  there exist $\varepsilon_0$ such that  (\ref{p121}) has at most $k$ limit cycles in some neighborhood of $\Gamma_{h_0}$ for any $\delta\in K$	and for any $0<\varepsilon<\varepsilon_0$ and  that (\ref{p121}) has exactly $k$ limit cycles in every neighbourhood of $\Gamma_{h_0}$ for some  $(\varepsilon, \delta)$.
\par The following proposition states that the number of periodic solutions of (\ref{p121}), called as limit cycles, in small neighbourhood for $\Gamma_{h_0}$ is less than or equal to the number of isolated zeros of the first order Melnikov function $M_1(h_0,\delta)$.
 \begin{pr}\label{p12P1}\cite{perko2013differential} 
 Let $ \delta_0\in K$. Then we have the following:
 \begin{enumerate}
 \item There is no limit cycle near $\Gamma_{h_0}$ for $\varepsilon+|\delta-\delta_0|$ small, if $M_1(h_0,\delta_0)\neq 0$.
 \item There is exactly one (at least one ) limit cycle $\Gamma(h_0,\varepsilon, \delta)$ for $\varepsilon+|\delta-\delta_0|$, which approaches $\Gamma_{h_0}$ as $(\varepsilon, \delta)\rightarrow (0, \delta_0)$ if $M_1(h_0, \delta_0)=0, ~\dfrac{\partial M_1}{\partial h}(h_0, \delta_0)\neq 0$ (if $h_0$ is a zero of $M(h, \delta_0)$ with odd multiplicity, respectively).
 \item If there exist $0\leq j\leq k$ such that $M_1(h_0,\delta_0)=0$ and $\dfrac{\partial^j M_1}{\partial h^j}(h_0,\delta)\neq 0$ then atmost $k$ limit cycles of (\ref{p121}) are bifurcated form $\Gamma_{h_0}$. 
 \end{enumerate}
  \end{pr}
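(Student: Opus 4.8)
The plan is to reduce everything to the zero set of the bifurcation function $F(h_0,\varepsilon,\delta)$ appearing in \eqref{p123}. By the displacement identity $H(B)-H(A)=\varepsilon F(h_0,\varepsilon,\delta)$, a point $a(h_0)$ of the Poincar\'e section returns to itself---i.e.\ produces a periodic orbit of \eqref{p121} near $\Gamma_{h_0}$---exactly when the displacement vanishes, and for $\varepsilon\neq 0$ this happens precisely when $F(h_0,\varepsilon,\delta)=0$. Since $F$ is $C^{\infty}$ in all variables with $F(h_0,0,\delta)=M_1(h_0,\delta)$, counting limit cycles near $\Gamma_{h_0}$ becomes the problem of counting zeros in $h_0$ of a smooth function that limits to $M_1(\cdot,\delta)$ as $\varepsilon\to 0$.

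Part (1) then follows by continuity alone: if $M_1(h_0,\delta_0)=F(h_0,0,\delta_0)\neq 0$, then $F$ stays bounded away from $0$ on a neighbourhood of $(h_0,0,\delta_0)$, so the displacement map has no zero and no limit cycle bifurcates for $\varepsilon+|\delta-\delta_0|$ small. Part (2) is the Implicit Function Theorem: the hypotheses give $F(h_0,0,\delta_0)=0$ and $\partial_h F(h_0,0,\delta_0)=\partial_h M_1(h_0,\delta_0)\neq 0$, which yields a unique smooth branch $h=h(\varepsilon,\delta)$ of zeros with $h(0,\delta_0)=h_0$, hence exactly one limit cycle tending to $\Gamma_{h_0}$ as $(\varepsilon,\delta)\to(0,\delta_0)$. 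The parenthetical odd-multiplicity variant is obtained by the same argument after isolating the lowest-order nonvanishing $h$-derivative and using the resulting sign change of $M_1$ across $h_0$ to force a transversal zero to persist.

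For part (3) I would bound the number of zeros of $F(\cdot,\varepsilon,\delta)$ near $h_0$ by the order of the zero of $M_1$ there. If $j\le k$ is the first index with $\partial_h^{\,j}M_1(h_0,\delta_0)\neq 0$, a smooth division (Weierstrass preparation in the $C^{\infty}$ category) factors $F$ locally as $F=u\cdot P$ with $u$ a nonvanishing unit and $P$ a Weierstrass polynomial of degree $j$ in $h$ with coefficients depending smoothly on $(\varepsilon,\delta)$; the real zeros of $P$ near $h_0$, and hence of $F$, number at most $j\le k$, giving at most $k$ bifurcating limit cycles. An equivalent route is a repeated application of Rolle's theorem: a zero of $F$ of multiplicity exceeding $k$ would force $\partial_h^{\,i}M_1(h_0,\delta_0)=0$ for all $i\le k$ in the limit, contradicting the hypothesis.

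The main obstacle is precisely part (3): controlling how a degenerate zero of $M_1$ of order $j$ can split into several simple zeros of the perturbed function $F$, and proving the split produces no more than $j$ of them uniformly for $\delta$ ranging over the compact set $K$. This is where the smooth preparation theorem (or the Rolle induction) does the real work, whereas parts (1) and (2) are immediate consequences of continuity and the Implicit Function Theorem.
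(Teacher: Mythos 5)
The paper never proves this proposition at all --- it is imported verbatim from Perko \cite{perko2013differential}, and the only argument supplied in that section is the proof of the companion stability statement, Proposition \ref{p12P2} --- so there is no internal proof to measure yours against; what follows assesses your argument on its own terms. It is the standard proof and is essentially sound: for $\varepsilon\neq 0$, limit cycles of (\ref{p121}) near $\Gamma_{h_0}$ correspond to zeros in $h$ of the bifurcation function $F(h,\varepsilon,\delta)$ of (\ref{p123}), with $F(h,0,\delta)=M_1(h,\delta)$; then (1) is continuity, (2) is the implicit function theorem, and (3) is a derivative-counting bound. Three repairs are needed, none fatal. First, Weierstrass preparation is an analytic-category theorem; in the $C^{\infty}$ setting of (\ref{p121}) you must invoke the Malgrange preparation theorem instead --- your hypotheses (the $h$-derivatives of $F$ of order below $j$ vanishing at $(h_0,0,\delta_0)$ and $\partial_h^{\,j}F(h_0,0,\delta_0)\neq 0$ with $j$ minimal) are exactly what it requires. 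Second, in the odd-multiplicity alternative of (2), your phrase ``force a transversal zero to persist'' overstates what the argument delivers: the sign change of $M_1(\cdot,\delta_0)$ across $h_0$ propagates to $F(\cdot,\varepsilon,\delta)$ for $\varepsilon+|\delta-\delta_0|$ small, and the intermediate value theorem then yields at least one zero, not necessarily a simple one --- which is fine, since ``at least one limit cycle'' is precisely what the statement claims in that case. Third, your Rolle formulation should be corrected from ``a zero of $F$ of multiplicity exceeding $k$'' to the following: if $F(\cdot,\varepsilon_n,\delta_n)$ had more than $j$ zeros in intervals shrinking to $h_0$ along some sequence $(\varepsilon_n,\delta_n)\rightarrow(0,\delta_0)$, repeated Rolle would produce points $\xi_n\rightarrow h_0$ with $\partial_h^{\,j}F(\xi_n,\varepsilon_n,\delta_n)=0$, whence $\partial_h^{\,j}M_1(h_0,\delta_0)=0$ by continuity, contradicting the hypothesis. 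Note also that the uniformity over the compact set $K$ you flag as the ``main obstacle'' is not actually needed: the conclusion is local in $\delta$ near the fixed $\delta_0$, so the Rolle (or Malgrange) argument already suffices. Incidentally, the paper's hypothesis in item (3) is garbled as printed, and your reading --- the minimal $j\leq k$ with $\partial_h^{\,j}M_1(h_0,\delta_0)\neq 0$, giving at most $j\leq k$ bifurcating limit cycles --- is the intended one.
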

Now we have the following result about the stability  of limit cycles using the first order Melnikov function.
\begin{pr}\label{p12P2}
 The limit cycle of (\ref{p121}) bifurcated from the periodic orbit of (\ref{p122}) passing through $(a(h),0)$ of the Poincare section  is stable if and only if $$\dfrac{dM_1}{dh}-M_1\dfrac{H_{xx}(a(h),0)}{H_x(a(h),0)^2}<0,$$
where $M_1$ is the first order Melnikov function for (\ref{p121}).
\end{pr}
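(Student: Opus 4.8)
The plan is to realize the limit cycle as a fixed point of the Poincaré return map on the section $S$ and to read off its stability from the first order Melnikov function. I would parametrize $S$ by the $x$-coordinate $a$, write the return map as $b=\mathcal{P}(a)$ with $b=b(h,\varepsilon,\delta)$ the first intersection point, and record that a limit cycle is a fixed point $a^{*}=\mathcal{P}(a^{*})$. Such a fixed point is asymptotically stable exactly when the multiplier satisfies $\mathcal{P}'(a^{*})<1$; since $\mathcal{P}'=1+O(\varepsilon)$, setting $\Delta(a)=\mathcal{P}(a)-a$ turns the stability question into the sign condition $\Delta'(a^{*})<0$.

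The central computation converts the Hamiltonian displacement in (\ref{p123}) into the coordinate displacement $\Delta$. Writing $G(a)=H(a,0)$, so that $G'(a)=H_x(a,0)$ and $G''(a)=H_{xx}(a,0)$, relation (\ref{p123}) becomes $G(b)-G(a)=\varepsilon F(G(a),\varepsilon,\delta)$. Expanding the left side as $G(b)-G(a)=H_x(a,0)\,\Delta+\tfrac12 H_{xx}(a,0)\,\Delta^2+O(\Delta^3)$ and inverting yields $\Delta(a)=\varepsilon F(G(a),\varepsilon,\delta)/H_x(a,0)+O(\varepsilon^2)$. I would then differentiate in $a$, using $dh/da=H_x(a,0)$, $\tfrac{d}{da}H_x(a,0)=H_{xx}(a,0)$ and $F(h,0,\delta)=M_1(h,\delta)$, so that the quotient rule gives
\begin{align}
\Delta'(a)=\varepsilon\left(\frac{dM_1}{dh}-M_1\,\frac{H_{xx}(a,0)}{H_x(a,0)^2}\right)+O(\varepsilon^2).\nonumber
\end{align}
For small $\varepsilon>0$ the sign of $\Delta'$ at the limit cycle coincides with the sign of the bracket, which is the claimed criterion.

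The step I expect to require the most care is the appearance of the curvature term $M_1\,H_{xx}/H_x^{2}$: it is not produced by the first order Melnikov displacement itself but by differentiating the factor $1/H_x(a,0)$ along the section, equivalently by the fact that $H_x$ is evaluated at the moving return point $b$ rather than at $a$. As a cross-check I would differentiate the implicit relation $G(b)-G(a)=\varepsilon F(G(a))$ directly to get $\mathcal{P}'(a)=\frac{H_x(a,0)}{H_x(b,0)}\bigl(1+\varepsilon F_h\bigr)$ and expand $H_x(b,0)=H_x(a,0)+H_{xx}(a,0)\,\Delta+O(\Delta^{2})$ with $\Delta=\varepsilon F/H_x+O(\varepsilon^{2})$, which reproduces the same leading term for $\mathcal{P}'(a)-1$. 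I would finally justify the sign convention tying $\Delta'(a^{*})<0$ to asymptotic stability (the map is orientation preserving with multiplier near $1$) and note that the $O(\varepsilon^{2})$ remainder is uniform in $h$ on compact subsets of $(\alpha,\beta)$, so that the leading term indeed controls the sign for all sufficiently small $\varepsilon$.
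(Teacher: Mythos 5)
Your proposal is correct and takes essentially the same route as the paper: both extract the first-order relation $H_x(a(h),0)\,(\partial b/\partial\varepsilon)_{\varepsilon=0}=M_1$ (your $\Delta=\varepsilon F/H_x+O(\varepsilon^2)$) from the displacement identity (\ref{p123}), then differentiate along the section using $dh/da=H_x(a,0)$ so that the curvature term $M_1 H_{xx}/H_x^2$ comes from the factor $1/H_x(a,0)$, and conclude via the multiplier condition $\mathcal{P}_\varepsilon'<1$. Your implicit-differentiation cross-check and the remarks on orientation and uniformity of the $O(\varepsilon^2)$ remainder merely make explicit points the paper's proof leaves tacit.
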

\begin{proof} From equation (\ref{p123}), we have
\begin{align}
H(b(h,\varepsilon, \delta),0)-H(a(h),0)=\varepsilon M_1(h,\delta)+o(\varepsilon^2).\nonumber
\end{align}
By Taylor's expansion in powers of $\varepsilon$, we have
\begin{align}\label{p124} 
H(a(h),0)+\varepsilon H_x(a(h),0)\left(\dfrac{\partial b}{\partial \varepsilon}\right)_{\varepsilon=0}+ o(\varepsilon^2)-H(a(h),0)
&=\varepsilon M_1(h, \delta)+o(\varepsilon^2).
\end{align}
Equating $\varepsilon$ order terms in equation (\ref{p124}) on both sides we get
\begin{align}
H_x(a(h),0)\left(\dfrac{\partial b}{\partial \varepsilon}\right)_{\varepsilon=0}
&=M_1(h,\delta).\nonumber
\end{align}
 Now if $\mathcal{P}_{\varepsilon}$ is the Poincare map of system (\ref{p121}) then we have $\mathcal{P}_{\varepsilon}(a(h))=b(h,\varepsilon,\delta)$. Hence 
 \begin{align}\label{p125}\left(\dfrac{\partial \mathcal{P}_{\varepsilon}}{\partial \varepsilon}\right)_{\varepsilon=0}=\dfrac{M_1(h,\delta)}{H_x(a(h),0)}.
 \end{align}
Now differentiang (\ref{p125}) with respect to $h$ we get, 
\begin{align}\label{p126}
\dfrac{d}{dx}\left[\left(\dfrac{\partial \mathcal{P}_{\varepsilon}}{\partial \varepsilon}\right)_{\varepsilon=0}\right]a'(h)&=\dfrac{1}{H_x(a(h),0)}\dfrac{dM_1}{dh}-M_1\dfrac{H_{xx}(a(h),0)}{(H_x(a(h),0))^2}a'(h).
\end{align}
Since $H(a(h),0)=h$, we have $H_x(a(h),0)a'(h)=1$. Therefore from (\ref{p126}) we get,
\begin{align*}
\dfrac{\partial}{\partial \varepsilon}\left[\left(\dfrac{d\mathcal{P}_{\varepsilon}}{dx}\right)\right]_{\varepsilon=0}&=\dfrac{dM_1}{dh}-M_1\dfrac{H_{xx}(a(h),0)}{H_x(a(h),0)^2}.
\end{align*}
Now if $0<\varepsilon \ll 1$, then we have 
\begin{align*}
\dfrac{d\mathcal{P}_{\varepsilon} }{dx}-\dfrac{d\mathcal{P}_{0}}{dx} &\approx\varepsilon\left (\dfrac{dM_1}{dh}-M_1\dfrac{H_{xx}(a(h),0)}{H_x(a(h),0)^2}\right).
\end{align*} 
But $\mathcal{P}_0$ is poincare return map for (\ref{p122}). Hence, $\mathcal{P}_0(a(h))=a(h)$, which imply that  $$\dfrac{d\mathcal{P}_{0}}{dx}(a(h))a'(h)=a'(h).$$ Hence,  
\begin{align*}
\dfrac{d\mathcal{P}_{\varepsilon} }{dx}-1 &\approx\varepsilon\left (\dfrac{dM_1}{dh}-M_1\dfrac{H_{xx}(a(h),0)}{H_x(a(h),0)^2}\right).
\end{align*} 
Thus, the limit cycle passing through $A(a(h),0)$ is stable if and only if $$ \dfrac{dM_1}{dh}-M_1\dfrac{H_{xx}(a(h),0)}{H_x(a(h),0)^2}<0.$$
\end{proof} 

\par General planar piecewise smooth differential system with two zones and switching manifold $\Sigma=\varphi^{-1}(0)$ is given by
\begin{align}\label{p127}\dot{X}=&\begin{cases}
( X_1^+(x,y),~~X_2^+(x,y)), ~~~(x,y)\in \Sigma^{+}\\
( X_1^-(x,y),~~X_2^-(x,y))~~~(x,y)\in \Sigma^{-}
\end{cases},
\end{align}
where $X_i^{\pm},f^{\pm},g^{\pm},i=1,2$  and $\varphi$  are sufficiently smooth functions on some open region containing the origin with $0$ as a regular value of $\varphi$ and $\Sigma^+=\varphi^{-1}(0,\infty),~~\Sigma^-=\varphi^{-1}(-\infty,0)$ are two zones of (\ref{p127}).
\par Now we denote $X^{\pm}=:(X_1^{\pm}, X_2^{\pm})$ ,
 $X^{\pm}\varphi=:<X^{\pm}, \nabla \varphi>$ and  $(X^{\pm})^k\varphi=:<X^{\pm},\nabla (X^{\pm})^{k-1}\varphi>$, where $<,>$ is an Euclidean dot product.
\par We say that a point $p\in \Sigma$ is a $k$th contact point for the vector field $X$ if  $(X^k\varphi)(p)\neq 0$ and $(X^{l}\varphi)(p) =0$ for $l=1,\cdots, k-1$.
 A point $p\in \Sigma$ is a $(k, l)$-contact singularity of $X^{\pm}$ if $X_0$ is a $k$th contact point for $X^{+}$ and is a  $l$th contact point for $X^{-}$.
\section{Perturbation of Piecewise Smooth Hamiltonian System}
\par 
Recently in \cite{liu2010bifurcation}, authors studied the number of limit cycles of the peicewise smooth Hamiltonian system
\begin{align}\label{p131}
	\dot{X}=&( H_y^+(x,y),-H_x^+(x,y))+\varepsilon (f^+(x,y,\varepsilon, \delta),g^+(x,y,\varepsilon, \delta)),& (x,y)\in \Sigma^{+}\\ \label{p132}
	\dot{X}=& ( H_y^-(x,y),-H_x^-(x,y))+\varepsilon (f^-(x,y,\varepsilon, \delta),g^-(x,y,\varepsilon, \delta)),& (x,y)\in \Sigma^{-}
\end{align}
i.e.,
\begin{align}\label{p133}
\dot{X}=&\begin{cases}
(H_y^+(x,y),-H_x^+(x,y))+\varepsilon (f^+(x,y,\varepsilon, \delta),g^+(x,y,\varepsilon, \delta)),&(x,y)\in \Sigma^{+}\\
( H_y^-(x,y),-H_x^-(x,y))+\varepsilon (f^-(x,y,\varepsilon, \delta),g^-(x,y,\varepsilon, \delta)),&(x,y)\in \Sigma^{-}
\end{cases}.
\end{align}
System (\ref{p133}) is a perturbation of the Hamiltonian system
\begin{align}\label{p134}
\dot{X}=&(H_y^+(x,y),-H_x^+(x,y)),(x,y)\in \Sigma^{+}\\ \label{p135}
\dot{X}=&( H_y^-(x,y),-H_x^-(x,y)),(x,y)\in \Sigma^{-}
\end{align}
or
\begin{align}\label{p136}
\dot{X}=&\begin{cases}
(H_y^+(x,y),-H_x^+(x,y)),&(x,y)\in \Sigma^{+}\\
( H_y^-(x,y),-H_x^-(x,y)),&(x,y)\in \Sigma^{-}
\end{cases}.
\end{align}
Suppose that the switching manifold for (\ref{p133}) and (\ref{p136}) is $\Sigma=\varphi^{-1}(0)$, where $\varphi(x,y)=y$.
Assume that the system (\ref{p136}) has a period annulus around the origin in some open  region $V$. 
 Let $L_+=V\cap \{(x,0):x>0\}$  and $L_-=V\cap \{(x,0):x<0\}$.
 Let $\Gamma^+_r:H^+(x,y)=r,~ y\geq 0$ be a trajectory of (\ref{p134}) which starts at $P(r)=(p(r),0)$ on $L_+$, ends at the point $P_1(r)=(p_1(r),0)$ on $L_-$ with the time of flight $t^+(r)$. Then the Poincare half return map $\mathcal{P}^+:L^+\rightarrow L^-$ is given by
\begin{align*}
\mathcal{P}^+(p(r))=p_1(r).
\end{align*}
Let $\Gamma^-_r:H^-(x,y)=s, y\leq 0 $ be the trajectory of (\ref{p135}) starting at $P_1(r)$ on $L_-$ and ending at the point $P(r)$ with time of flight $t^-(r)$. Therefore the next half return map $\mathcal{P}^-:L^-\rightarrow L^+$ is given by 

\begin{align*}
\mathcal{P}^-(p_1(r))=p(r).
\end{align*}

\begin{figure}
	\centering
	\includegraphics[width=.4\linewidth]{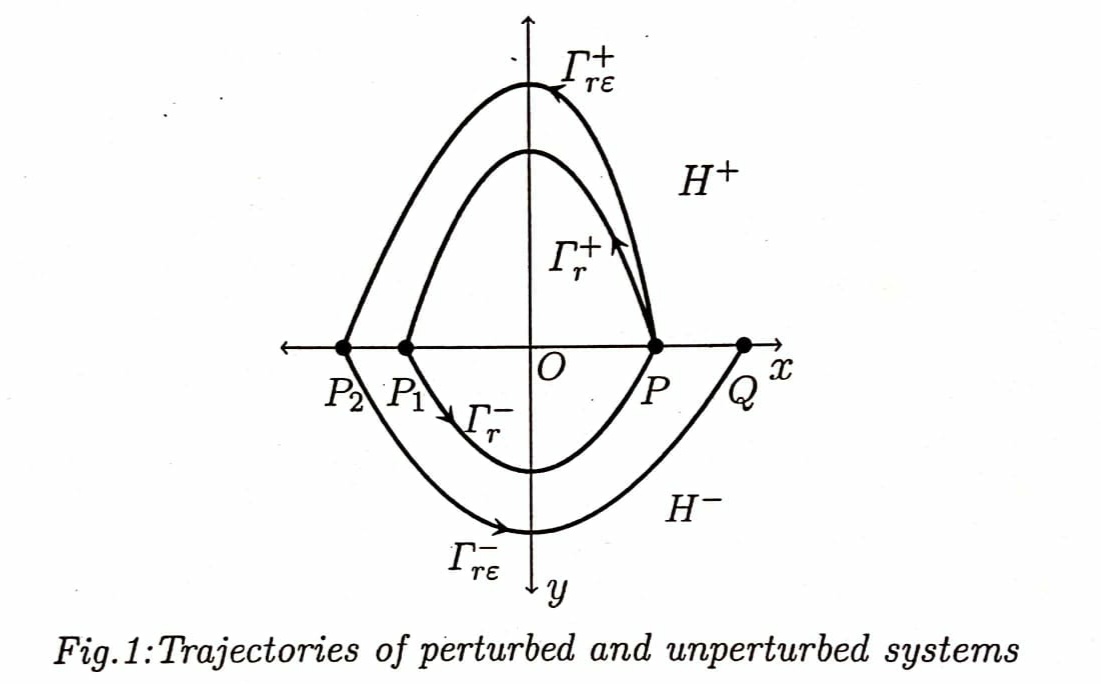}
	\caption{Trajectories showing the difference map}
	\label{fig1}
\end{figure}
Let $\Gamma^+_{r\varepsilon}$ be a trajectory of (\ref{p131}) starting at $P(r)$ and  meeting the first time on $L_-$ at the point $P_2(p_2(r,\varepsilon),0)$ and let $\Gamma^-_{r\varepsilon}$ be the trajectory of the system  (\ref{p132}) starting at $P_2$ and the meeting first time on $L_+$ at the point $Q(q(r,\varepsilon),0)$ (Fig\ref{fig1}).
\par Then the Poincare map $\mathcal{P}_{\varepsilon}$ for (\ref{p133}) defined on  $L_+$ is given by
\begin{align*}
\mathcal{P}_{\varepsilon}(p(r))=q(r,\varepsilon).
\end{align*}
Observe that $\Gamma^+_{r\varepsilon}\cup \Gamma^-_{r\varepsilon}$ forms a closed trajectory of the system (\ref{p133}) if and only if  $p(r)=q(r,\varepsilon)$. But  $p(r)=q(r,\varepsilon)$ is equivalent to $H^+(p(r),0)=H^+(q(r,\varepsilon),0)$. Hence, analogous to the case of smooth differential system, we use the difference map   
\begin{align*}
H^+(Q)-H^+(P)=& \varepsilon F(r,\varepsilon ,\delta)= \sum_{k=1}^{\infty}M_k(r,\delta)\varepsilon ^k,
\end{align*}
where $M_k(r,\delta)$ is called as the $k$th order Melnikov function for system (\ref{p133}) and $F$ is a bifurcation function. 
\par Similar to the Proposition \ref{p12P1} and Proposition \ref{p12P2}, we can state the conditions for cyclicity and stability of limit cycles for the system (\ref{p133}).
\begin{pr}\label{p13P1}
	Assume that the system (\ref{p136}) has a period annulus with center at the origin. Let $M_1(r)$ be the first order Melnikov function for the system (\ref{p133}). Then we have the following:
	\begin{enumerate}
		\item If there exist $0\leq j\leq k$ such that $M_1(r_0,\delta)=0$ and $\dfrac{\partial^j M_1}{\partial r^j}(r_0,\delta)\neq 0$, then at most $k$ limit cycles of (\ref{p133}) are bifurcated form $\Gamma_{r_0}$, where $\Gamma_{r_0}$ is a periodic orbit of (\ref{p136}) through $r_0$.
		\item Limit cycle of (\ref{p133}) bifurcated from the periodic orbit of (\ref{p136}) passing through $(p(r),0)$ of the Poincare section  is stable if and only if $$\dfrac{dM_1}{dr}-M_1(r)\dfrac{H_{xx}^+(p(r),0)}{H_x^+(p(r),0)^2}<0.$$
	\end{enumerate}
\end{pr}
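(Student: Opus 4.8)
The plan is to transcribe the arguments of Proposition~\ref{p12P1} and Proposition~\ref{p12P2} into the piecewise setting, with the single Hamiltonian $H$ and its return map replaced by the half-Hamiltonian $H^+$ together with the full return map $\mathcal{P}_\varepsilon=\mathcal{P}^-\circ\mathcal{P}^+$ built from the two half-return maps. The conceptual starting point is identical to the smooth case: a closed trajectory of (\ref{p133}) near $\Gamma_{r_0}$ occurs precisely when $p(r)=q(r,\varepsilon)$, and since $H^+$ is strictly monotone along $L_+$ (equivalently $H_x^+\neq 0$ there) this is equivalent to the vanishing of the difference map $H^+(Q)-H^+(P)=\varepsilon F(r,\varepsilon,\delta)$. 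Hence the limit cycles bifurcating from $\Gamma_{r_0}$ are in bijection with the isolated zeros of the bifurcation function $F(\cdot,\varepsilon,\delta)$ near $r_0$, and because $F(r,0,\delta)=M_1(r,\delta)$ the zero structure of $M_1$ governs the count.

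For part~(1) I would argue exactly as in the result of Perko quoted in Proposition~\ref{p12P1}. If $M_1(r_0,\delta)=0$ while $\partial^j M_1/\partial r^j(r_0,\delta)\neq 0$ for some $j\leq k$, then $r_0$ is a zero of $M_1(\cdot,\delta)$ of multiplicity at most $k$. Since $F$ is smooth and $F(\cdot,\varepsilon,\delta)\to M_1(\cdot,\delta)$ together with its $r$-derivatives as $\varepsilon\to 0$, a Weierstrass preparation argument (equivalently, a repeated application of Rolle's theorem) shows that $F(\cdot,\varepsilon,\delta)$ has at most $k$ zeros in a fixed neighbourhood of $r_0$ for all sufficiently small $\varepsilon$. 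Each zero corresponds to one limit cycle, which yields the bound.

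For part~(2) I would replay the computation of Proposition~\ref{p12P2} line by line, replacing $H$ by $H^+$, $a(h)$ by $p(r)$, $b$ by $q$, and $h$ by $r$. Starting from $H^+(q(r,\varepsilon),0)-H^+(p(r),0)=\varepsilon M_1(r,\delta)+o(\varepsilon^2)$, a first-order Taylor expansion in $\varepsilon$ gives $H_x^+(p(r),0)\,(\partial q/\partial\varepsilon)_{\varepsilon=0}=M_1(r,\delta)$, hence $(\partial\mathcal{P}_\varepsilon/\partial\varepsilon)_{\varepsilon=0}=M_1(r,\delta)/H_x^+(p(r),0)$. Differentiating in $r$ and using $H_x^+(p(r),0)\,p'(r)=1$, which follows from $H^+(p(r),0)=r$, produces $\partial_\varepsilon[d\mathcal{P}_\varepsilon/dx]_{\varepsilon=0}=\dfrac{dM_1}{dr}-M_1\dfrac{H_{xx}^+(p(r),0)}{H_x^+(p(r),0)^2}$. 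Because the unperturbed piecewise orbit closes up, $\mathcal{P}_0=\mathcal{P}^-\circ\mathcal{P}^+$ is the identity on $L_+$, so $d\mathcal{P}_0/dx=1$ and for $0<\varepsilon\ll 1$ one has $d\mathcal{P}_\varepsilon/dx-1\approx\varepsilon\big(\frac{dM_1}{dr}-M_1\frac{H_{xx}^+(p(r),0)}{H_x^+(p(r),0)^2}\big)$; stability of the fixed point $p(r)$, namely $|d\mathcal{P}_\varepsilon/dx|<1$, is therefore equivalent to the displayed inequality being negative.

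The step I expect to be the genuine obstacle is justifying the smooth dependence underlying all of these Taylor expansions in the piecewise framework. Unlike the smooth case, $\mathcal{P}_\varepsilon$ is a composition of two maps defined across the switching line $\Sigma=\{y=0\}$, so I would first verify that each half-return map is well defined and $C^k$ in $(r,\varepsilon)$ near $\Gamma_{r_0}$. This rests on the trajectories crossing $\Sigma$ transversally at $P$, $P_1$ and $Q$ — that is, $H_x^{\pm}\neq 0$ there, so that these are regular crossing points rather than folds — and on the implicit function theorem applied to the crossing times $t^{\pm}(r)$. Once this transversality and the resulting smoothness of $\mathcal{P}^{\pm}$ are established, together with the identity $\mathcal{P}_0=\mathrm{id}$ supplied by the period-annulus hypothesis on (\ref{p136}), both assertions reduce to the smooth computations already carried out in Section~2.
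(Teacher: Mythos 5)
Your proposal is correct and takes exactly the route the paper intends: the paper's own proof of this proposition is the one-line remark that it is similar to Propositions \ref{p12P1} and \ref{p12P2}, and you carry out precisely that transcription, replacing $H$, $a(h)$, $b(h,\varepsilon,\delta)$ by $H^+$, $p(r)$, $q(r,\varepsilon)$ and the return map by $\mathcal{P}_\varepsilon=\mathcal{P}^-\circ\mathcal{P}^+$, with the same difference-map reduction $H^+(Q)-H^+(P)=\varepsilon F(r,\varepsilon,\delta)$ already set up in Section~3. Your additional verification that the half-return maps are smooth via transversal crossings ($H_x^{\pm}\neq 0$ at $P$, $P_1$, $Q$) is a detail the paper leaves implicit rather than a genuinely different argument.
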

\begin{proof} Proof is similar to that of Proposition \ref{p12P1} and Proposition \ref{p12P2}.\end{proof}
 \par In \cite{liu2010bifurcation}, the first order Melnikov function for (\ref{p133}) when $f^{\pm},g^{\pm}$ are independent of $\varepsilon$ and $\delta$, is given by the following proposition.
\begin{pr}\label{p13P2}\cite{liu2010bifurcation}
	If the system (\ref{p136}) has a period annulus then the first order Melnikov function for the system (\ref{p133}) is 
	\begin{align*}
	M_1(r)=&\dfrac{H^+_x(P)}{H_x^-(P)}\left[\dfrac{H_x^-(P_1)}{H_x^+(P_1)}\int_{\widehat{PP_1}}(g^+dx-f^+dy)+\int_{\widehat{P_1P}}(g^-dx-f^-dy)\right],
	\end{align*} where $\widehat{PP_1}$ denote the path along the trajectory  $\Gamma^+_r$ and $\widehat{P_1P}$ denote the path along the trajectory  $\Gamma^-_r$.
\end{pr}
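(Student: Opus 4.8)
The plan is to compute the difference map $H^+(Q)-H^+(P)$ by following the perturbed orbit through its two smooth pieces and extracting the coefficient of $\varepsilon$. The engine is the Hamiltonian identity already used in the smooth case of \emph{Section 2}: along a solution of the upper system (\ref{p131}) one has $dH^+=H_x^+dx+H_y^+dy=\varepsilon(H_x^+f^++H_y^+g^+)\,dt$, because the unperturbed part contributes $H_x^+H_y^+-H_y^+H_x^+=0$. Integrating this along the perturbed upper arc $\Gamma^+_{r\varepsilon}$ from $P$ to $P_2$ and, to first order in $\varepsilon$, replacing the perturbed arc by the unperturbed arc $\widehat{PP_1}$ (on which $dx=H_y^+dt$, $dy=-H_x^+dt$) converts the time integral into a line integral, giving $H^+(P_2)-H^+(P)=\varepsilon\int_{\widehat{PP_1}}(g^+dx-f^+dy)+O(\varepsilon^2)$. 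The identical computation on the lower system (\ref{p132}) along $\Gamma^-_{r\varepsilon}$ from $P_2$ to $Q$ yields $H^-(Q)-H^-(P_2)=\varepsilon\int_{\widehat{P_1P}}(g^-dx-f^-dy)+O(\varepsilon^2)$.

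The remaining work is bookkeeping at the two switching points, which is where the ratio factors will appear and which I expect to be the delicate part. Since at $\varepsilon=0$ the composed orbit closes up (so $q(r,0)=p(r)$ and $p_2(r,0)=p_1(r)$), the displacements $q-p$ and $p_2-p_1$ are $O(\varepsilon)$; moreover $Q,P$ both lie on $L_+$ and $P_2,P_1$ both lie on $L_-$. I would therefore linearize $H^\pm$ along the switching line using $\tfrac{d}{dx}H^\pm(x,0)=H_x^\pm(x,0)$, obtaining $H^+(Q)-H^+(P)=H_x^+(P)(q-p)+O(\varepsilon^2)$ and $H^-(Q)-H^-(P)=H_x^-(P)(q-p)+O(\varepsilon^2)$, hence the conversion $H^+(Q)-H^+(P)=\frac{H_x^+(P)}{H_x^-(P)}\big[H^-(Q)-H^-(P)\big]+O(\varepsilon^2)$, which lets me pass from the $H^+$-difference I want to the $H^-$-difference I can compute along the lower arc.

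Then I would split $H^-(Q)-H^-(P)=[H^-(Q)-H^-(P_2)]+[H^-(P_2)-H^-(P)]$. The first bracket is the lower line integral found above. For the second, I use that $H^-(P)=H^-(P_1)=s$ and $H^+(P)=H^+(P_1)=r$ on the unperturbed orbits, so that $H^-(P_2)-H^-(P)=H_x^-(P_1)(p_2-p_1)+O(\varepsilon^2)$ while $H^+(P_2)-H^+(P)=H_x^+(P_1)(p_2-p_1)+O(\varepsilon^2)$; eliminating $p_2-p_1$ between these two and substituting the upper line integral for $H^+(P_2)-H^+(P)$ produces the factor $\frac{H_x^-(P_1)}{H_x^+(P_1)}$ in front of $\int_{\widehat{PP_1}}(g^+dx-f^+dy)$. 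Assembling the three relations and reading off the coefficient of $\varepsilon$ in $H^+(Q)-H^+(P)=\varepsilon F(r,\varepsilon,\delta)$ gives the claimed formula for $M_1(r)$. The main obstacle is precisely keeping the first-order matching at $P_1$ and at $P$ mutually consistent, namely converting the $O(\varepsilon)$ displacements of the switching points correctly between the two different Hamiltonians, since everything else reduces to the same Hamiltonian-identity computation as in the smooth case.
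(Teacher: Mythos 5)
Your proof is correct and takes essentially the same route as the paper's own argument (the first-order portion of the proof of Theorem~\ref{p13T1}, which specializes to this proposition): the identity $dH^{\pm}=\varepsilon\left(H_x^{\pm}f^{\pm}+H_y^{\pm}g^{\pm}\right)dt$ along each half-orbit, replacement of the perturbed arcs by $\Gamma_r^{\pm}$ to first order, and linearization of $H^{\pm}$ at the two crossing points. Your elimination of the displacements $q-p$ and $p_2-p_1$ is precisely the paper's introduction of $\rho=\left(\partial q/\partial\varepsilon\right)_{\varepsilon=0}$ and $\sigma=\left(\partial p_2/\partial\varepsilon\right)_{\varepsilon=0}$ in the four-term decomposition $L_1+L_2+L_3+L_4$ of Lemma~\ref{p13L1}, so the two arguments agree up to notation.
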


 In this section, we first derive the expressions for first order and second order Melnikov functions for piecewise smooth perturebed Hamiltonian system
 \begin{align}\label{p137}
 \dot{X}=& ( H_y^+(x,y)+\varepsilon f_1^+(x,y)+\varepsilon^2 f_2^+(x,y),  -H_x^+(x,y)+\varepsilon g_1^+(x,y)+\varepsilon^2 g_2^+(x,y)),~ y>0 \\ \label{p138}
  \dot{X} =&( H_y^-(x,y)+\varepsilon f_1^-(x,y)+\varepsilon^2 f_2^-(x,y),  -H_x^-(x,y)+\varepsilon g_1^-(x,y)+\varepsilon^2 g_2^-(x,y)),~ y<0
 \end{align}
 or
 \begin{align}\label{p139}
 \dot{X}=&\begin{cases}
( H_y^+(x,y)+\varepsilon f_1^+(x,y)+\varepsilon^2 f_2^+(x,y),  -H_x^+(x,y)+\varepsilon g_1^+(x,y)+\varepsilon^2 g_2^+(x,y)),~ y>0 \\
 ( H_y^-(x,y)+\varepsilon f_1^-(x,y)+\varepsilon^2 f_2^-(x,y),  -H_x^-(x,y)+\varepsilon g_1^-(x,y)+\varepsilon^2 g_2^-(x,y)),~ y<0
 \end{cases},
 \end{align} 
 under the assumption that the uperturbed system  (\ref{p136}) has a period annulus around the origin.
 \begin{thm}\label{p13T1}
    If the system (\ref{p136}) has a period annulus around the origin, 
     then the first order Melnikov functions for the system (\ref{p139}) is given by
    \begin{align}\label{p1310}
     M_1(r)=&\dfrac{H_x^+(P)}{H_x^-(P)}\left( \dfrac{H_x^-(P_1)}{H_x^+(P_1)}\int_{\Gamma_r^+}\omega_1^++\int_{\Gamma_r^-}\omega_1^-\right).
    \end{align}
   Further, if $M_1\equiv 0$ then the second order Melnikov function $M_2$ for the system (\ref{p139}) is given by
   \begin{align}\label{p1311}
   M_2(r)\frac{H_x^-(P)}{H_x^+(P)}
   =&\left(\int_{\Gamma_r^-}\omega_2^-+\dfrac{H_x^-(P_1)}{H_x^+(P_1)}\int_{\Gamma_r^+}\omega_2^+ \right)\nonumber\\&+\left(K^-(P(r))\int_{\Gamma_r^-}\frac{\omega_1^-}{H_y^-}+\dfrac{H_x^-(P_1)K^+(P(r))}{H_x^+(P_1)}\int_{\Gamma_r^+}\frac{\omega_1^+}{H_y^+}\right)\nonumber\\
   &-\left(\int_{\Gamma^-_r}\frac{f_1^-\omega_1^-}{H_y^-}+\frac{H_x^-(P_1)}{H_x^+(P_1)}\int_{\Gamma_r^+}\frac{f_1^+\omega_1^+}{H_y^+}\right)+\frac{1}{2}\left( H_{xx}^-(P_1)-\frac{H_x^-(P_1)}{H_x^+(P_1)}H_{xx}^+(P_1)\right)\sigma^2, 
   \end{align}
      
     where 
     \begin{align} 
    & \omega_i^{\pm}= g_i^{\pm}dx-f_i^{\pm}dy~\mbox{for}~i=1,2; ~\sigma=\dfrac{1}{H_x^+(P_1)}\int_{\Gamma_r^+}\omega_1^+,~~
     K^{\pm}=\frac{H_x^{\pm}f_1^{\pm}+H_y^{\pm}g_1^{\pm}}{H_x^{\pm}}.\nonumber
     \end{align}
     
 \end{thm}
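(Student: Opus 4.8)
The plan is to compute the difference map $H^+(Q)-H^+(P)$ by tracking the trajectory through both zones and expanding in powers of $\varepsilon$. I would introduce intermediate quantities for the crossing points: let $P_2=(p_2(r,\varepsilon),0)$ be where the perturbed $y>0$ trajectory $\Gamma^+_{r\varepsilon}$ first meets $L_-$, and $Q=(q(r,\varepsilon),0)$ where the perturbed $y<0$ trajectory $\Gamma^-_{r\varepsilon}$ returns to $L_+$. The key telescoping identity is to write
\begin{align}
H^+(Q)-H^+(P)=\bigl(H^+(Q)-H^+(P_2)\bigr)+\bigl(H^+(P_2)-H^-(P_2)\bigr)+\bigl(H^-(P_2)-H^-(P_1)\bigr)+\bigl(H^-(P_1)-H^+(P_1)\bigr),\nonumber
\end{align}
noting $H^+(P)=H^-(P)$ and similar matching on the switching line is NOT assumed, so these boundary-mismatch terms must be carried. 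First I would evaluate each line integral $\int dH^\pm$ along the perturbed arcs, using $\dot X=(H_y^\pm+\cdots,-H_x^\pm+\cdots)$ so that along a trajectory $dH^\pm = (H_x^\pm f + H_y^\pm g)\,dt = (g\,dx-f\,dy)$ up to the unperturbed cancellation; this produces the $\omega_i^\pm=g_i^\pm dx - f_i^\pm dy$ one-forms integrated over $\Gamma_r^\pm$ at leading order.

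Next I would extract the first-order term. At order $\varepsilon$ only the leading one-forms $\omega_1^\pm$ survive, integrated along the \emph{unperturbed} orbits $\Gamma_r^\pm$, and the boundary-mismatch differences $H^\pm(P_1)$ contribute the ratio factors. Collecting these and using the chain-rule relation between $H^+$-increments and $H^-$-increments at the crossing points (which is where the factors $H_x^-(P_1)/H_x^+(P_1)$ and the overall $H_x^+(P)/H_x^-(P)$ enter, since moving along $L_\pm$ changes $H^+$ and $H^-$ at different rates given by their respective $x$-derivatives) yields \eqref{p1310}. This first part is essentially a recapitulation of Proposition \ref{p13P2} adapted to the $\varepsilon$-expansion notation.

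The substantive work is the second-order term under the hypothesis $M_1\equiv 0$. Here I would expand the crossing abscissae as $p_2(r,\varepsilon)=p_1(r)+\varepsilon\,p_2^{(1)}(r)+O(\varepsilon^2)$ and $q(r,\varepsilon)=p(r)+\varepsilon\, q^{(1)}(r)+O(\varepsilon^2)$, and likewise expand the times of flight. The quantity $\sigma=\tfrac{1}{H_x^+(P_1)}\int_{\Gamma_r^+}\omega_1^+$ is precisely the first-order displacement of the crossing point $P_2$ along $L_-$, obtained by differentiating the relation $H^+(P_2)=H^+(P)+\varepsilon\int\omega_1^+ + \cdots$ and solving for $p_2^{(1)}$. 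Three distinct mechanisms then feed $M_2$: (i) the genuine second-order forms $\omega_2^\pm$ integrated over $\Gamma_r^\pm$; (ii) the correction coming from integrating $\omega_1^\pm$ not over the fixed orbit but over the $\varepsilon$-perturbed orbit, which by a variation-of-the-orbit argument produces the terms involving $K^\pm$ and $f_1^\pm/H_y^\pm$ (the $1/H_y^\pm$ arising from the time-reparametrization $dt=dx/H_y^\pm$ along the flow); and (iii) the quadratic boundary-mismatch term $\tfrac12\bigl(H_{xx}^- - \tfrac{H_x^-}{H_x^+}H_{xx}^+\bigr)\sigma^2$ coming from Taylor-expanding $H^\pm$ to second order at $P_1$ in the displacement $\sigma$.

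\emph{Main obstacle.} The hard part is the careful bookkeeping in mechanism (ii): correctly differentiating the line integral $\int_{\Gamma_{r\varepsilon}^\pm}\omega_1^\pm$ with respect to $\varepsilon$ at $\varepsilon=0$, which requires the first variation of the trajectory itself (not just its endpoints). I would handle this by parametrizing each arc by $x$, writing the integral explicitly with the perturbed solution $y=y(x,\varepsilon)$, and using the fact that the first variation $\partial y/\partial\varepsilon$ of a solution of the perturbed Hamiltonian flow satisfies a linear variational equation whose solution can be expressed through $K^\pm$ and the Hamiltonian structure. The assumption $M_1\equiv 0$ is essential precisely here: it forces the leading endpoint displacements to vanish identically in $r$, so that $\sigma$ and the variational corrections combine cleanly into \eqref{p1311} rather than leaving uncontrolled first-order residue. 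Balancing the ratio prefactors $H_x^\pm(P)$, $H_x^\pm(P_1)$ through both zones is the final routine-but-delicate step.
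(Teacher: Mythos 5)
Your proposal is correct and is essentially the paper's own argument: your telescoping identity is the paper's Lemma \ref{p13L1} after regrouping (your term $H^+(Q)-H^+(P_2)$ equals the paper's $L_1+L_2+L_3$, and to evaluate it you must re-split it exactly so, since $dH^+$ is not of order $\varepsilon$ along the lower arc, where only $dH^-$ is), your extraction of $\sigma$ from $H^+(P_2)=H^+(P)+\varepsilon\int_{\Gamma_r^+}\omega_1^+ +\cdots$ is precisely the paper's identity for $\sigma$, and your three mechanisms for $M_2$ --- the $\omega_2^{\pm}$ integrals, the first variation of the orbit yielding the $K^{\pm}$ and $f_1^{\pm}\omega_1^{\pm}/H_y^{\pm}$ terms (carried out in Lemmas \ref{p13L2} and \ref{p13L3} via Green's theorem on the strip between $\Gamma^{\pm}_{r\varepsilon}$ and $\Gamma^{\pm}_r$ together with the explicit computation of $(\partial y_{\varepsilon}/\partial \varepsilon)_{\varepsilon=0}$), and the quadratic mismatch in $\sigma^2$ at $P_1$ --- are exactly how the paper assembles \eqref{p1311}. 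The only imprecision is that $M_1\equiv 0$ forces only the return displacement $\rho$ of $Q$ to vanish, while the half-return displacement $\sigma$ of $P_2$ persists (indeed it supplies the $\sigma^2$ terms), so your remark that the ``leading endpoint displacements'' vanish must be read as referring to $\rho$ alone.
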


We prove Theorem \ref{p13T1} in sequence of following Lemmas. 
\begin{lm}\label{p13L1}
	Difference map for the system (\ref{p139}) can be expressed as,
	\begin{align}\label{p1312}
	H^+(Q(q(r,\varepsilon),0)-H^+(P(p(r),0))=\varepsilon H_x^+(P)\rho+\dfrac{\varepsilon^2}{2}\left[H_{xx}^+(P)\rho^2+H_x^+(P)\eta \right]+o(\varepsilon^3),
	\end{align}
	where $\displaystyle \rho=\left[\frac{\partial }{\partial \varepsilon}\left(q(r,\varepsilon)\right)\right]_{\varepsilon=0}$ and $\displaystyle \eta=\left[\frac{\partial^2 q(r,\varepsilon)}{\partial \varepsilon^2}\right]_{\varepsilon=0}$.
\end{lm}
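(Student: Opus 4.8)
The plan is to obtain (\ref{p1312}) by a two-stage Taylor expansion in $\varepsilon$. The essential observation is that both endpoints $P=(p(r),0)$ and $Q=(q(r,\varepsilon),0)$ lie on $L_+$, where $y=0$, so the difference map reduces to $H^+(q(r,\varepsilon),0)-H^+(p(r),0)$, a function of the single scalar $q(r,\varepsilon)$. Since the unperturbed system (\ref{p136}) has a period annulus around the origin, at $\varepsilon=0$ the composed trajectory $\Gamma^+_r\cup\Gamma^-_r$ closes up, which forces $q(r,0)=p(r)$. Hence the difference map vanishes at $\varepsilon=0$, and I would expand about that point.

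First I would Taylor-expand the scalar $q(r,\varepsilon)$ in powers of $\varepsilon$ about $\varepsilon=0$, writing
\[
q(r,\varepsilon)=p(r)+\varepsilon\rho+\tfrac{\varepsilon^2}{2}\eta+o(\varepsilon^3),
\]
with $\rho$ and $\eta$ as defined in the statement. Next I would expand the smooth function $x\mapsto H^+(x,0)$ about $x=p(r)$ to second order, obtaining
\[
H^+(q(r,\varepsilon),0)-H^+(p(r),0)=H_x^+(P)\,\Delta+\tfrac{1}{2}H_{xx}^+(P)\,\Delta^2+o(\varepsilon^3),
\]
where $\Delta=q(r,\varepsilon)-p(r)=\varepsilon\rho+\tfrac{\varepsilon^2}{2}\eta+o(\varepsilon^3)$. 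Substituting this expansion of $\Delta$, noting that $\Delta^2=\varepsilon^2\rho^2+o(\varepsilon^3)$, and collecting the first- and second-order terms in $\varepsilon$ yields exactly (\ref{p1312}).

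The only content beyond this bookkeeping is justifying that $q(r,\varepsilon)$ is genuinely a $C^2$ function of $\varepsilon$, so that $\rho$ and $\eta$ exist and the stated remainder holds. This is the point where the piecewise structure must be addressed: the return coordinate $q(r,\varepsilon)$ is the composition of the two perturbed half-return maps, each obtained by flowing the smooth vector field on $\Sigma^{\pm}$ until the orbit meets the switching line $y=0$. Provided the crossings are transversal — which is guaranteed by $H_x^{\pm}\neq0$ at the intersection points, the same nondegeneracy that makes the denominators in (\ref{p1310}) and (\ref{p1311}) meaningful — the implicit function theorem gives smooth dependence of both the crossing times and the crossing coordinates on $\varepsilon$, and hence smoothness of the composite $q(r,\cdot)$. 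Granting this, the double expansion above is fully rigorous, and I expect the transversality/smooth-dependence justification to be the main obstacle, the remaining algebra being routine.
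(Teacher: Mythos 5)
Your proof is correct, and it takes a genuinely more direct route than the paper's. You exploit the fact that both endpoints lie on $y=0$, so the difference map is the scalar function $H^+(q(r,\varepsilon),0)-H^+(p(r),0)$, and you simply compose two Taylor expansions: $q(r,\varepsilon)=p(r)+\varepsilon\rho+\tfrac{\varepsilon^2}{2}\eta+o(\varepsilon^3)$ (valid since the period annulus at $\varepsilon=0$ forces $q(r,0)=p(r)$) with the expansion of $x\mapsto H^+(x,0)$ about $p(r)$. The paper instead telescopes the difference map through the intermediate crossing point $P_2$ and \emph{both} Hamiltonians, writing it as $L_1+L_2+L_3+L_4$ with $L_1=H^+(Q)-H^-(Q)$, $L_2=H^-(Q)-H^-(P_2)$, $L_3=H^-(P_2)-H^+(P_2)$, $L_4=H^+(P_2)-H^+(P)$, and Taylor-expands each piece separately; in the sum every term involving $\sigma=\left[\partial_\varepsilon p_2\right]_{\varepsilon=0}$ and $\tau=\left[\partial^2_\varepsilon p_2\right]_{\varepsilon=0}$ cancels and one recovers exactly your two-line computation. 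For the lemma in isolation the decomposition is thus redundant and your argument is leaner; but it is not wasted work in the paper, because the individual expansions (\ref{p1316})--(\ref{p1321}) of $L_2$, $L_3$, $L_4$ are precisely what get matched against the integral formulas of Lemmas \ref{p13L2} and \ref{p13L3} in the proof of Theorem \ref{p13T1} to solve for $\sigma$, $\tau$, $\rho$, $\eta$, so the paper proves the lemma while simultaneously setting up the bookkeeping for $M_1$ and $M_2$. Your proposal also supplies a justification the paper leaves tacit: that $q(r,\cdot)$ is differentiable in $\varepsilon$ at all, via transversality of the crossings ($H_x^{\pm}\neq 0$ at $P$, $P_1$) and the implicit function theorem applied to the two half-return maps. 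One small calibration: $C^2$ dependence on $\varepsilon$ only gives a remainder $o(\varepsilon^2)$, so to obtain the stated $o(\varepsilon^3)$ you need $C^3$ (or smooth) dependence --- which your own implicit-function-theorem argument does deliver here, since the vector fields in (\ref{p139}) are smooth.
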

\begin{proof} Let 
$\displaystyle \sigma=\left[\frac{\partial p_2(r,\varepsilon)}{\partial \varepsilon}\right]_{\varepsilon=0}, \tau=\left[\frac{\partial^2 p_2(r,\varepsilon)}{\partial \varepsilon^2}\right]_{\varepsilon=0}, \rho=\left[\frac{\partial q(r,\varepsilon)}{\partial \varepsilon}\right]_{\varepsilon=0}$ and $\displaystyle \eta=\left[\frac{\partial^2 q(r,\varepsilon)}{\partial \varepsilon^2}\right]_{\varepsilon=0}.$
The difference map for the system (\ref{p139}) is
\begin{align}\label{p1313}
H^+(Q(r,\varepsilon))-H^+(P(r))=&L_1+L_2+L_3+L_4, 
\end{align}
where \begin{align}
&L_1=H^+(Q(r))-H^-(Q(r)), &L_2=H^-(Q(r))-H^-(P_2(r)),\nonumber \\ &L_3=H^-(P_2(r))-H^+(P_2(r)), &L_4=H^+(P_2(r))-H^+(P(r)).\nonumber
\end{align}
Now by the Taylor's series expansion in powers of $\varepsilon$ we have
\begin{align*} 
\sum_{i=1}^{4}L_i(r,\varepsilon)=&\sum_{i=1}^{4}\left[ \varepsilon \left(\frac{\partial L_i}{\partial \varepsilon}\right)_{\varepsilon=0}+ \frac{\varepsilon^2}{2} \left(\frac{\partial^2 L_i}{\partial \varepsilon^2}\right)_{\varepsilon=0}+o(\varepsilon^3) \right].
\end{align*}

\par Since \begin{align*} 
L_1=&H^+(Q)-H^-(Q)= H^+(q(r,\varepsilon),0)-H^-(q(r,\varepsilon),0),\end{align*} we get \begin{align}\label{p1314}
\left[\dfrac{\partial L_1}{\partial \varepsilon}\right]_{\varepsilon=0}=&
(H_x^+(P)-H_x^-(P))\rho, \\ \label{p1315} \mbox{and}~~~~~~
\left[\dfrac{\partial^2 L_1}{\partial \varepsilon^2}\right]_{\varepsilon=0}=&
(H_{xx}^+(P)-H_{xx}^-(P))\rho^2+(H_x^+(P)-H_x^-(P))\eta.
\end{align}
Similarly,
\begin{align}
L_3=&H^-(P_2)-H^+(P_2)= H^-(p_2(r,\varepsilon),0)-H^+(p_2(r,\varepsilon),0)\nonumber
\end{align}
imply that
\begin{align}\label{p1316}
\left[\frac{\partial L_3}{\partial \varepsilon}\right]_{\varepsilon=0}=&
(H_x^-(P_1)-H_x^+(P_1))\sigma, \\   \label{p1317} \mbox{and} ~~
\left[\frac{\partial^2 L_3}{\partial \varepsilon^2}\right]_{\varepsilon=0}=&
(H_{xx}^-(P_1)-H_{xx}^+(P_1))\sigma^2+(H_x^-(P_1)-H_x^+(P_1))\tau .
\end{align}
Also,
\begin{align} L_2=&H^-(Q)-H^-(P_2)=H^-(q(r,\varepsilon),0)-H^-(p_2(r,\varepsilon),0)\nonumber
\end{align} gives us
\begin{align}\label{p1318}
\left[\frac{\partial L_2}{\partial \varepsilon}\right]_{\varepsilon=0}
=&H_x^-(P)\rho-H_x^-(P_1)\sigma,\\   \label{p1319}\mbox{and} ~~
\left[\frac{\partial^2 L_2}{\partial \varepsilon^2}\right]_{\varepsilon=0}
=&H_{xx}^-(P)\rho^2-H_{xx}^-(P_1)\sigma^2+H_x^-(P)\eta-H_x^-(P_1)\tau.
\end{align}
Further,
\begin{align*}
L_4=H^+(P_2)-H^+(P)=H^+((p_2(r,\varepsilon),0))-H^+(p(r),0),
\end{align*} so that
\begin{align}\label{p1320}
\left[\frac{\partial L_4}{\partial \varepsilon}\right]_{\varepsilon=0}
=&H_x^+(P_1)\sigma,\\  \label{p1321}
\mbox{and} ~~~~~
\left[\frac{\partial^2 L_4}{\partial \varepsilon^2}\right]_{\varepsilon=0}
=&H_{xx}^+(P_1)\sigma^2+H_x^+(P_1)\tau.
\end{align}
Hence, from equations (\ref{p1313})-(\ref{p1321}) we get
equation (\ref{p1312}).
\end{proof}
\begin{lm}\label{p13L2}
	The expression for $L_4$ is given by
	\begin{align}\label{p1322}
	L_4=&\varepsilon \int_{\Gamma^+_r}\omega_1^++\varepsilon^2\left(\int_{\Gamma^+_r}\omega_2^++K^+(P(r))\int_{\Gamma^+_r} \frac{\omega_1^+}{H_y^+}-\int_{\Gamma_r^+}\frac{f_1^+\omega_1^+}{H_y^+}\right)+o(\varepsilon^3),
	\end{align}
	where 
	$\displaystyle \omega_i^+=g^+_i dx -f^+_i dy ,i=1,2$.
\end{lm}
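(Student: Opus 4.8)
The plan is to compute $L_4=H^+(P_2)-H^+(P)$ as the line integral $\int_{\Gamma^+_{r\varepsilon}}dH^+$ along the perturbed upper trajectory and expand it in $\varepsilon$. First I would differentiate $H^+$ along the flow of (\ref{p137}): writing $A_i^+:=H_x^+f_i^++H_y^+g_i^+$ for $i=1,2$, the Hamiltonian part cancels and
\[
\frac{d}{dt}H^+\big(x(t,\varepsilon),y(t,\varepsilon)\big)=\varepsilon A_1^++\varepsilon^2A_2^+ ,
\]
so that $L_4=\varepsilon\int_0^{T(\varepsilon)}A_1^+\,dt+\varepsilon^2\int_0^{T(\varepsilon)}A_2^+\,dt$, where $T(\varepsilon)$ is the flight time from $P$ to $P_2$. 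Along any integral curve of (\ref{p137}) one checks that $g_1^+\dot x-f_1^+\dot y=A_1^++O(\varepsilon^2)$ (the $O(\varepsilon)$ terms cancel identically), while on the unperturbed orbit $A_i^+\,dt=\omega_i^+$. Hence the two integrals collapse to $\varepsilon\int_{\Gamma^+_{r\varepsilon}}\omega_1^+$ and, to leading order, $\varepsilon^2\int_{\Gamma^+_r}\omega_2^+$. This already delivers the $\omega_2^+$ term of (\ref{p1322}) and reduces the whole problem to expanding the single perturbed line integral $\int_{\Gamma^+_{r\varepsilon}}\omega_1^+$ to first order in $\varepsilon$.

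For that expansion I would introduce the first variation $(u,v)=\partial_\varepsilon(x,y)\big|_{\varepsilon=0}$, solving the inhomogeneous variational equation of (\ref{p134}) along $\Gamma^+_r$ with $(u,v)(0)=(0,0)$, since the base point $P$ is independent of $\varepsilon$. The key identity comes from differentiating $H^+$ in $\varepsilon$ instead of in $t$: because $\tfrac{d}{dt}H^+$ integrates the source term, one obtains the clean relation $H_x^+u+H_y^+v=\int_0^tA_1^+\,ds$, so the running value of the $\omega_1^+$-integral controls the ``energy component'' of the variation. The moving endpoint is handled through the crossing condition $y(T(\varepsilon),\varepsilon)=0$, which yields $T'(0)=v(t^+)/H_x^+(P_1)$; substituting this into $\partial_\varepsilon p_2$ reproduces $\sigma=\tfrac{1}{H_x^+(P_1)}\int_{\Gamma^+_r}\omega_1^+$ and keeps the bookkeeping consistent with Lemma \ref{p13L1}.

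Differentiating the line integral then gives
\[
\frac{d}{d\varepsilon}\int_{\Gamma^+_{r\varepsilon}}\omega_1^+\Big|_{\varepsilon=0}
=A_1^+(P_1)\,T'(0)+\int_{\Gamma^+_r}\big(\partial_xA_1^+\,u+\partial_yA_1^+\,v\big)\,dt ,
\]
and the task is to reduce the right-hand side to the combination $K^+(P)\int_{\Gamma^+_r}\tfrac{\omega_1^+}{H_y^+}-\int_{\Gamma^+_r}\tfrac{f_1^+\omega_1^+}{H_y^+}$ of (\ref{p1322}), recalling $K^+=A_1^+/H_x^+$. I would do this by integrating by parts against the variational equations and inserting the energy identity, so that the tangential part of $(u,v)$ — which is not available in closed form — is eliminated against the endpoint term $A_1^+(P_1)T'(0)$, leaving the two weighted integrals together with boundary contributions; tracking those boundary evaluations carefully is what is meant to produce the coefficient $K^+(P)$. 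A useful cross-check is Green's theorem applied to the thin lens bounded by $\Gamma^+_{r\varepsilon}$, $\Gamma^+_r$ and the segment $[P_1,P_2]\subset\{y=0\}$: there $\int_{\Gamma^+_{r\varepsilon}}\omega_1^+-\int_{\Gamma^+_r}\omega_1^+$ equals a segment term plus $\iint d\omega_1^+$, and the Jacobian of $(t,\varepsilon)\mapsto(x,y)$ is exactly $H_x^+u+H_y^+v=\int_0^tA_1^+\,ds$, which converts the area integral into a single time integral.

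The hard part is precisely this last step, the first-order variation of $\int_{\Gamma^+_{r\varepsilon}}\omega_1^+$. The difficulty is genuine: the variation field $(u,v)$ is pinned down only in its $\nabla H^+$-direction by the energy identity, its tangential component requires the full non-autonomous variational equation, and the endpoint $P_2$ moves simultaneously. Organising the integration by parts so that the undetermined tangential component cancels and the expression collapses to the weighted integrals of (\ref{p1322}) with the correct boundary terms is where all the care lies; by comparison the reduction in the first paragraph and the $\omega_2^+$ contribution are routine, and the identical argument on the lower half then supplies the analogue of (\ref{p1322}) for $L_2$, after which Lemma \ref{p13L1} assembles $M_2$.
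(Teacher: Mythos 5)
Your first two paragraphs are correct and in places slicker than the paper (the observation that $g_1^+\dot x-f_1^+\dot y=A_1^++O(\varepsilon^2)$ holds identically along the perturbed flow, and the recovery of $\sigma=\frac{1}{H_x^+(P_1)}\int_{\Gamma_r^+}\omega_1^+$ from the crossing condition $T'(0)=v(t^+)/H_x^+(P_1)$ together with the energy identity $H_x^+u+H_y^+v=\int_0^tA_1^+\,ds$, are both verifiable and clean). But the proof stops exactly where the lemma begins. The entire $\varepsilon^2$-content of (\ref{p1322}) is the identity
\begin{align*}
A_1^+(P_1)\,T'(0)+\int_{\Gamma_r^+}\bigl(\partial_xA_1^+\,u+\partial_yA_1^+\,v\bigr)\,dt
=K^+(P(r))\int_{\Gamma_r^+}\frac{\omega_1^+}{H_y^+}-\int_{\Gamma_r^+}\frac{f_1^+\omega_1^+}{H_y^+},
\end{align*}
and you never establish it. The energy identity pins down only the $\nabla H^+$-component of the variation field $(u,v)$, while the integrand $\partial_xA_1^+\,u+\partial_yA_1^+\,v$ genuinely depends on the tangential component, which solves a non-autonomous variational equation with no closed form. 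You assert that an integration by parts "against the variational equations" will make this component cancel against the endpoint term and that the boundary evaluations will produce the coefficient $K^+(P(r))$, but no such scheme is exhibited, and your own phrasing ("where all the care lies") concedes that the decisive cancellation is unverified. As written this is a plausible programme, not a proof of the lemma.

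The paper avoids the tangential obstruction entirely by changing parameterization rather than doing fixed-time variational bookkeeping: it writes $dt=dy/\dot y$ and expands, applies Green's theorem to $\int K^+\,dy$ over the thin region $R$ between $\Gamma^+_{r\varepsilon}$ and $\Gamma_r^+$ (the segment $\overrightarrow{P_1P_2}$ contributes nothing because $dy=0$ on the $x$-axis, and the corner piece $R_2$ near $P_1$ is shown to be $o(\varepsilon^2)$ --- an estimate your thin-lens remark also needs but omits), and then represents both curves as graphs $y=y(x,\varepsilon)$. In the graph representation the first variation is purely vertical and is available in closed form, $I^+(x)=\int_{p_1(r)}^{x}K^+H_x^+/(H_y^+)^2\,dx$ as in (\ref{p1327}), so a single integration by parts in $x$ produces the boundary coefficient $K^+(P(r))$, and the term $\varepsilon\int_{\Gamma_r^+}g_1^+\omega_1^+/H_x^+$ generated by the $dt$-expansion cancels against its counterpart from (\ref{p1328}). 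Your "cross-check" paragraph --- Green's theorem on the lens with the Jacobian of $(t,\varepsilon)\mapsto(x,y)$ equal to $H_x^+u+H_y^+v$ --- is in essence this argument, so the way to close the gap is to promote that cross-check to the main computation, i.e.\ work with the fixed-$x$ (vertical) variation, which is explicitly computable, instead of the fixed-$t$ variation, which is not.
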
	
\begin{proof}
\par We have,
\begin{align}\label{p1323} 
L_4=&H^+(P_2)-H^+(P) =\int_{\widehat{PP_2}}dH^+
=\int_{\widehat{PP_2}} H^+_x dx+H^+_ydy \nonumber \\
=&\int_{\widehat{PP_2}} \left[ H^+_x (H_y^++\varepsilon f_1^++\varepsilon^2 f_2^+)+H^+_y(-H_x^++ \varepsilon g_1^+ +\varepsilon^2 g_2^+)\right] dt \nonumber\\
=&\varepsilon\int_{\widehat{PP_2}} ( H^+_x f_1^++H_y^+g_1^+)dt +\varepsilon^2\int_{\widehat{PP_2}}(H_x^+ f_2^++H_y^+g_2^+) dt \nonumber\\ 
=&\varepsilon\int_{\widehat{PP_2}} ( H^+_x f_1^++H_y^+g_1^+)dt +\varepsilon^2\int_{\widehat{PP_1}}(H_x^+ f_2^++H_y^+g_2^+) dt+o(\varepsilon^3)\nonumber\\
=&\varepsilon\int_{\widehat{PP_2}} ( H^+_x f_1^++H_y^+g_1^+)dt +\varepsilon^2\int_{\Gamma_r^+}\omega_2^+ +o(\varepsilon^3).
\end{align} 
\par Along the path $\widehat{PP_2}$, we have $dt=\dfrac{dy}{\dot{y}}=\dfrac{dy}{-H_x^++\varepsilon g_1^++\varepsilon^2 g_2^+}=\dfrac{-1}{H_x^+}\left(1+\varepsilon \dfrac{g_1^+}{H_x^+}+o(\varepsilon ^2)\right)dy.$ 
\par Let us denote $K^+=\dfrac{H_x^+f_1^++H_y^+g_1^+}{H_x^+}, K^+dy=-g_1^+dx+f_1^+dy=-\omega_1^+ $ on $\Gamma_r^+$ and $R$ is the region bounded by $\Gamma^+_{r\varepsilon}$ and  $\Gamma^+_r\cup \overrightarrow{P_1P_2}$, where $\overrightarrow{P_1P_2}$ denote the line segment from $P_1$ to $P_2$. Then
\begin{align}\label{p1324}
\int_{\widehat{PP_2}} ( {H_x^+} {f_1}^+ &+{H_y}^+{g_1}^+)dt
=\int_{\widehat{PP_2}} -\dfrac{{H_x^+} {f_1}^+ +{H_y}^+{g_1}^+}{{H_x}^+}\left(1+\varepsilon \dfrac{{g_1}^+}{{H_x}^+}+o(\varepsilon ^2)\right)dy \nonumber \\
=&-\int_{\widehat{PP_2}}\dfrac{{H_x}^+{f_1}^++{H_y}^+{g_1}^+}{{H_x}^+}dy-\varepsilon \int_{\widehat{PP_2}}\dfrac{{g_1}^+({H_x}^+{f_1}^++{H_y}^+{g_1}^+)}{{{H_x}^+}^2}dy+o(\varepsilon^2)\nonumber\\
=&-\int_{\widehat{PP_2}}K^+dy-\varepsilon \left(\int_{\widehat{PP_1}}\dfrac{{g_1}^+K^+}{{H_x}^+}dy+o(\varepsilon)\right)+o(\varepsilon^2) \nonumber\\
=&-\int_{\widehat{PP_1}}K^+dy-\int_{\overrightarrow{P_1P_2}}K^+dy-\iint_R \dfrac{\partial K^+}{\partial x}dxdy-\varepsilon \left(\int_{\widehat{PP_1}}\dfrac{K^+{g_1}^+}{{H_x}^+}dy+o(\varepsilon)\right)+o(\varepsilon^2) \nonumber\\
=&\int_{\Gamma^+_r}\omega^+_1-\int_{\overrightarrow{P_1P_2}}K^+dy-\iint_R \dfrac{\partial K^+}{\partial x}dxdy-\varepsilon \int_{\Gamma^+_r}\dfrac{K^+{g_1}^+}{{H_x}^+}dy+o(\varepsilon^2)\nonumber\\
=&\int_{\Gamma^+_r}\omega^+_1-\iint_R \dfrac{\partial K^+}{\partial x}dxdy+\varepsilon \int_{\Gamma^+_r}\dfrac{{g_1}^+\omega_1^+}{{H_x}^+}+o(\varepsilon^2).
\end{align}

Now suppose that $R=R_1\cup R_2$, where $R_1$ is the region bounded by $\Gamma^+_r,\Gamma^+_{r\varepsilon},x=p_1(r)$ and $x=p(r)$ whereas $R_2$ is bounded by $y=0,\Gamma^+_{r\varepsilon},x=p_2(r,\varepsilon)$ and $x=p_1(r)$. Note that, since the radial distance $z$ form $P_1(r)$ to the point on $\Gamma^+_{r\varepsilon}$ in $R_2$ is of order 
$\varepsilon$, we have $\displaystyle \int\int_{R_2}dxdy=\int_{0}^{\frac{\pi}{2}}\int_{0}^{z}rdrd\theta=\frac{\pi}{4}z^2=o(\varepsilon^2)$. Therefore 
\begin{align}\label{p1325}
\displaystyle \iint_{R_2}\dfrac{\partial K^+}{\partial x}dxdy=o(\varepsilon^2).
\end{align} 
\par Let us represent $\Gamma_{r\varepsilon}^+$ and $\Gamma_r^+$  by $ y_{\varepsilon}=y(x,\varepsilon),y_0=y(x,0) $ respectively.
Put $y(x,s)=y_0(x)+s(y_{\varepsilon}(x)-y_0(x)),~~p_1(r)\leq x\leq p(r),~~0\leq s\leq 1.$ Hence, area element for region $R_1$ becomes $dydx=(y_{\varepsilon}(x)-y_0(x))dsdx$. Therefore
\begin{align}\label{p1326}
\iint_{R_1}\dfrac{\partial K^+}{\partial x}dxdy 
=&\int_{p_1(r)}^{p(r)}\left(\int_{0}^{1}\dfrac{\partial K^+}{\partial x}(y_{\varepsilon}(x)-y_0(x))ds\right)dx \nonumber\\
=&\varepsilon \int_{p_1(r)}^{p(r)}\left(\int_{0}^{1}\dfrac{\partial K^+}{\partial x}\left(\frac{\partial y_{\varepsilon}}{\partial \varepsilon}\right)_{\varepsilon=0}ds\right)dx +o(\varepsilon^2)\nonumber\\
=&\varepsilon \int_{p_1(r)}^{p(r)}\left(\int_{0}^{1}\dfrac{\partial K^+}{\partial x}(x,y_0(x))\left(\frac{\partial y_{\varepsilon}}{\partial \varepsilon}\right)_{\varepsilon=0}ds\right)dx +o(\varepsilon^2)\nonumber\\ 
=& \varepsilon \int_{p_1(r)}^{p(r)}\dfrac{\partial K^+}{\partial x}(x,y_0)\left(\frac{\partial y_{\varepsilon}}{\partial \varepsilon}\right)_{\varepsilon=0}dx +o(\varepsilon^2).
\end{align}
Now along  $\Gamma^+_{r\varepsilon}$, we have 
\begin{align}
&\dot{ y_{\varepsilon}}=\dfrac{\partial y_{\varepsilon}}{\partial x}\dot{x}=-{H_x}^+ +\varepsilon {g_1}^+ +\varepsilon^2 {g_2}^+ \Rightarrow  \dfrac{\partial y_{\varepsilon}}{\partial x}=\dfrac{-{H_x}^+ +\varepsilon {g_1}^+ +\varepsilon^2 {g_2}^+}{{H_y}^+ +\varepsilon {f_1}^+ +\varepsilon^2 {f_2}^+}\nonumber
 \\
\Rightarrow & y_{\varepsilon}=\int_{p_1 (r)}^x\dfrac{-{H_x}^+ +\varepsilon {g_1}^+ +\varepsilon^2 {g_2}^+}{{H_y}^+ +\varepsilon {f_1}^++\varepsilon^2 {f_2}^+}ds =\int_{p_1(r)}^x \dfrac{-{H_x}^+}{{H_y}^+}ds+\varepsilon \int_{p_1(r)}^x \dfrac{{H_x}^+{f_1}^+ +{H_y}^+{g_1}^+}{{{H_y}^+}^2}ds+o(\varepsilon^2).\nonumber
\end{align}
Note here that last integrals is taken along $\Gamma_r^+$. From above expression we have
\begin{align}\label{p1327}
\left(\dfrac{\partial y_{\varepsilon}}{\partial\varepsilon}\right)_{\varepsilon =0}=&\int_{p_1(r)}^x \dfrac{{H_x}^+{f_1}^+ +{H_y}^+{g_1}^+}{{{H_y}^+}^2}ds 
=\int_{p_1(r)}^x\dfrac{K^+{H_x}^+}{({H_y}^+)^2}dx=I^+(x), ~\mbox{(say)}.
\end{align} 
Therefore, from equations (\ref{p1325}), (\ref{p1326}) and (\ref{p1327}), we get
\begin{align*}
\iint_{R}\dfrac{\partial K^+}{\partial x}dxdy=\varepsilon \int_{p_1(r)}^{p(r)}\dfrac{\partial K^+}{\partial x}(x,y_0)I^+dx+o(\varepsilon^2).
\end{align*}
Now using integration by parts, we have
\begin{align}\label{p1328}
\iint_{R}\dfrac{\partial K^+}{\partial x}dxdy=\varepsilon\left( K^+(P(r))\int_{p_1(r)}^{p(r)}\frac{K^+H_x^+}{(H_y^+)^2}dx\right)-\varepsilon \int_{p_1(r)}^{p(r)}\frac{(K^+)^2H_x^+}{(H_y^+)^2}dx+o(\varepsilon^2)\nonumber\\
=-\varepsilon K^+(P(r))\int_{\Gamma_r^+}\frac{\omega_1^+}{H_y^+}+\varepsilon \int_{\Gamma_r^+}\frac{f_1^+\omega_1^+}{H_y^+}+\varepsilon \int_{\Gamma_r^+}\frac{g_1^+\omega_1^+}{H_x^+}+o(\varepsilon^2).
\end{align}
Hence, from (\ref{p1323}), (\ref{p1324}) and (\ref{p1328}) we obtain the formula for $L_4$.
\end{proof}
\begin{lm}\label{p13L3}
The expression for $L_2$ is given by
	\begin{align}\label{p1329}
	L_2=&\varepsilon \int_{\Gamma^-_r}\omega_1^-+\varepsilon^2\left(\int_{\Gamma^-_r}\omega_2^-+K^-(P(r))\int_{\Gamma^-_r}\frac{\omega_1^-}{H_y^-}-\int_{\Gamma_r^-}\frac{f_1^-\omega_1^-}{H_y^-}\right)+o(\varepsilon^3),
	\end{align}
	where 
	$\displaystyle \omega_i^-=g^-_i dx -f^-_i dy ,i=1,2 $.
\end{lm}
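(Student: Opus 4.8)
The plan is to mirror the argument of Lemma \ref{p13L2} almost verbatim, exchanging the upper subsystem (\ref{p137}) for the lower subsystem (\ref{p138}) and replacing the arc $\widehat{PP_2}$ along $\Gamma^+_{r\varepsilon}$ by the arc $\widehat{P_2Q}$ along $\Gamma^-_{r\varepsilon}$ traversed from $P_2$ to $Q$. First I would write $L_2 = H^-(Q)-H^-(P_2) = \int_{\widehat{P_2Q}} dH^- = \int_{\widehat{P_2Q}}(H_x^-\,dx + H_y^-\,dy)$ and substitute the dynamics of (\ref{p138}), namely $\dot x = H_y^- + \varepsilon f_1^- + \varepsilon^2 f_2^-$ and $\dot y = -H_x^- + \varepsilon g_1^- + \varepsilon^2 g_2^-$. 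The Hamiltonian terms cancel, leaving $L_2 = \varepsilon \int_{\widehat{P_2Q}}(H_x^- f_1^- + H_y^- g_1^-)\,dt + \varepsilon^2\int_{\widehat{P_2Q}}(H_x^- f_2^- + H_y^- g_2^-)\,dt + o(\varepsilon^3)$; replacing the arc by $\Gamma_r^-$ to leading order in the last integral converts it into $\varepsilon^2 \int_{\Gamma_r^-}\omega_2^-$, exactly as in (\ref{p1323}).

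Next, following (\ref{p1324}), I would rewrite the remaining first order integral using $dt = dy/\dot y = -\tfrac{1}{H_x^-}\bigl(1 + \varepsilon \tfrac{g_1^-}{H_x^-} + o(\varepsilon^2)\bigr)\,dy$ and the abbreviation $K^- = \tfrac{H_x^- f_1^- + H_y^- g_1^-}{H_x^-}$, for which $K^-\,dy = -\omega_1^-$ on $\Gamma_r^-$. Applying Green's theorem over the region $R$ enclosed by $\Gamma^-_{r\varepsilon}$ and $\Gamma_r^-$ together with the short segment joining their endpoints produces $\int_{\Gamma_r^-}\omega_1^-$, an area integral $\iint_R \partial_x K^-\,dxdy$, and an $O(\varepsilon)$ correction carrying $\int_{\Gamma_r^-} g_1^-\omega_1^-/H_x^-$. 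This reproduces the structure of (\ref{p1324}) with every $+$ superscript replaced by $-$.

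For the area integral I would split $R = R_1 \cup R_2$ as in the proof of Lemma \ref{p13L2}: the corner piece $R_2$ near the switching point again has area $O(z^2) = o(\varepsilon^2)$ by the same polar estimate (\ref{p1325}), so $\iint_{R_2}\partial_x K^-\,dxdy = o(\varepsilon^2)$; on $R_1$ I would use the homotopy $y(x,s) = y_0(x) + s(y_\varepsilon(x)-y_0(x))$ of (\ref{p1326}), compute $(\partial y_\varepsilon/\partial\varepsilon)_{\varepsilon=0} = \int \tfrac{K^- H_x^-}{(H_y^-)^2}\,dx =: I^-(x)$ from the trajectory ODE of (\ref{p138}), and integrate by parts exactly as in (\ref{p1327})--(\ref{p1328}). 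Collecting the terms and combining with the $\varepsilon^2\int_{\Gamma_r^-}\omega_2^-$ contribution yields (\ref{p1329}).

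The only genuinely new bookkeeping --- and the step I expect to demand the most care --- is the orientation in the lower half plane. Because $\widehat{P_2Q}$ is traversed with $y\le 0$ and the endpoints satisfy $P_2 \to P_1$ and $Q \to P(r)$ as $\varepsilon \to 0$, the limits of the $x$-integration and, crucially, the evaluation of the boundary term in the integration by parts are interchanged relative to the $L_4$ computation. I must verify that $I^-$ vanishes at the corner and that the surviving boundary contribution is $K^-(P(r))$ rather than $K^-(P_1)$, so that the sign pattern of (\ref{p1329}) matches that of (\ref{p1322}). Once the orientation and the negligibility of $R_2$ are confirmed, the remaining computations are identical to those of Lemma \ref{p13L2}.
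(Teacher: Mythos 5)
Your proposal is correct and coincides with the paper's own treatment: the paper's entire proof of this lemma is the single sentence that it is ``similar to the proof of Lemma \ref{p13L2}'', and your superscript-swapped rerun of (\ref{p1323})--(\ref{p1328}) along the lower arc $\widehat{P_2Q}$, with the same Green's-theorem decomposition $R=R_1\cup R_2$ and the same integration by parts, is exactly what is intended. By flagging the one genuine asymmetry --- the lower arc is anchored at the $\varepsilon$-dependent point $P_2=(p_2(r,\varepsilon),0)$ (displacement $\sigma$) rather than at a fixed endpoint as in Lemma \ref{p13L2}, so the vanishing of the variation $I^-$ at the corner and the survival of the boundary term as $K^-(P(r))$ genuinely require checking --- you are in fact more careful than the paper, which passes over this point in silence.
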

\begin{proof}
	Proof is similar to the proof of Lemma \ref{p13L2}.
\end{proof}

\begin{proof} [Proof of Theorem \ref{p13T1}]
\par Comparing coefficients of $\varepsilon$ and $\varepsilon^2$ in the expression for $L_4$ obtained in Lemma (\ref{p13L2}) and from expressions (\ref{p1320}), (\ref{p1321}), we have
\begin{align}\label{p1330}
\sigma=\dfrac{1}{H_x^+(P_1)}\int_{\Gamma_r^+}\omega_1^+,~~ \mbox{and}
\end{align} 
\begin{align}\label{p1331}
\dfrac{1}{2}H_x^+(P_1)\tau=&\int_{\Gamma^+_r}\omega_2^++K^+(P(r))\int_{\Gamma^+_r} \frac{\omega_1^+}{H_y^+}-\int_{\Gamma_r^+}\frac{f_1^+\omega_1^+}{H_y^+} -\dfrac{1}{2}H_{xx}^+(P_1)\sigma^2.
\end{align} 
Similarly, from Lemma (\ref{p13L3}) and expressions (\ref{p1318}), (\ref{p1319}) we get
\begin{align}\label{p1332}
H_x^-(P)\rho=&H_x^-(P_1)\sigma+ \int_{\Gamma_r^-}\omega_1^-=\dfrac{H_x^-(P_1)}{H_x^+(P_1)}\int_{\Gamma_r^+}\omega_1^++\int_{\Gamma_r^-}\omega_1^-,~~\mbox{and}
\end{align}
\begin{align} \label{p1333}
\frac{1}{2}H_x^-(P)\eta=&\int_{\Gamma^-_r}\omega_2^-+K^-(P(r))\int_{\Gamma^-_r}\frac{\omega_1^-}{H_y^-}-\int_{\Gamma_r^-}\frac{f_1^-\omega_1^-}{H_y^-}-\frac{1}{2}H_{xx}^-(P)\rho^2+\frac{1}{2}H_{xx}^-(P_1)\sigma^2+\frac{1}{2}H_x^-(P_1)\tau.
\end{align}
From (\ref{p1312}) and (\ref{p1332}) we get the first order Melnikov function.
\par Now if the first order Melnikov function is identically zero, then from equation (\ref{p1312}) we have  $\rho\equiv 0$. Hence, from (\ref{p1312}) and (\ref{p1333}), the second order Melnikov function is  
\begin{align*}
M_2(r)={H_x}^+(P)\eta 
=\dfrac{{H_x}^+(P)}{{H_x}^-(P)}\left(\int_{\Gamma^-_r}{\omega_2}^--\int_{\Gamma^-_r}\frac{{f_1}^-{\omega_1}^-}{{H_y}^-}+K^-(P(r))\int_{\Gamma_r^-}\frac{{\omega_1}^-}{{H_y}^-}+\frac{1}{2}{H_{xx}}^-(P_1)\sigma^2+\frac{1}{2}{H_x}^-(P_1)\tau \right).\end{align*} 
and hence by substituting $\tau$ from (\ref{p1331}) we get the required expression for $M_2$.
\end{proof}
\par If the Hamiltonian system (\ref{p136}) is extended smoothly on the boundary $y=0$, then it becomes a smooth Hamiltonian system. In this case the first order and second order  Melnikov function are simple line integrals of one forms. Further, if perturbation of this system is also smooth, then the first order and second order Melnikov functions obtained from Theorem \ref{p13T1} are well known integrals of one forms as in the following corollary.  
\begin{cor}\label{p13C1}
	If  in the  system (\ref{p139}) $\displaystyle \lim_{y\rightarrow 0^+} {H_x}^+(x,y)=\lim_{y\rightarrow 0^-}{H_x}^-(x,y)$ and $\displaystyle \lim_{y\rightarrow 0^+}{H_y}^+(x,y)=\lim_{y\rightarrow 0^-}{H_y}^-(x,y)$ for all $x\in \R$, then the first order  and the second order Melnikov functions are given by
	\begin{align}\label{p1334}
	M_1(r)&=\int_{\Gamma_r^+}\omega_1^++\int_{\Gamma_r^-}\omega_1^- ~~~\mbox{and}\\\label{p1335}
	M_2(r) 
	&=\left(\int_{\Gamma_r^-}\omega_2^-+\int_{\Gamma_r^+}\omega_2^+ \right)-\left(\int_{\Gamma^-_r}\frac{f_1^-\omega_1^-}{H_y^-}+\int_{\Gamma_r^+}\frac{f_1^+\omega_1^+}{H_y^+}\right)\nonumber\\
	&+\left(K^-(P(r))\int_{\Gamma_r^-}\frac{\omega_1^-}{H_y^-}+K^+(P(r))\int_{\Gamma_r^+}\frac{\omega_1^+}{H_y^+}\right),
	\end{align} respectively.
	\par Further, if $\displaystyle \lim_{y\rightarrow 0^+} f_i^+(x,y)=\lim_{y\rightarrow 0^-}f_i^-(x,y)$ and $\displaystyle \lim_{y\rightarrow 0^+}g_i^+(x,y)=\lim_{y\rightarrow 0^-}g_i^-(x,y)$ for $ i=1,2$ and for all $x\in \R$, then the
	first order and second order Melnikov functions are given by
	\begin{align}\label{p1336}
	M_1(r)=\oint_{\Gamma_r}\omega_1,~~\mbox{and}~~ M_2(r)=\oint_{\Gamma_r}\omega_2-\oint_{\Gamma_r}\frac{f_1\omega_1}{H_y}+K(P(r))\oint_{\Gamma_r}\frac{\omega_1}{H_y},
	\end{align} respectively,\\
	where $\displaystyle H_x(x,y)=\begin{cases}
	H_x^+(x,y)& \mbox{if}~y>0\\ H_x^-(x,y)&\mbox{if}~y<0 \\ \displaystyle \lim_{y\rightarrow 0^+}H_x^+(x,y)&\mbox{if}~y=0
	\end{cases},~ H_y(x,y)=\begin{cases}
	H_y^+(x,y)& \mbox{if}~y>0\\ H_y^-(x,y)&\mbox{if}~y<0 \\ \displaystyle \lim_{y\rightarrow 0^+}H_y^+(x,y)&\mbox{if}~y=0
	\end{cases},\\ f_i(x,y)=\begin{cases}
	f_i^+(x,y)& \mbox{if}~y>0\\ f_i^-(x,y)&\mbox{if}~y<0 \\ \displaystyle \lim_{y\rightarrow 0^+}f_i^+(x,y)&\mbox{if}~y=0
	\end{cases},~ g_i(x,y)=\begin{cases}
	g_i^+(x,y)& \mbox{if}~y>0\\ g_i^-(x,y)&\mbox{if}~y<0 \\ \displaystyle \lim_{y\rightarrow 0^+}g_i^+(x,y)&\mbox{if}~y=0
	\end{cases}, \\ \omega_i=g_idx-f_idy,~~K=\frac{H_xf_1+H_yg_1}{H_x}$ for $i=1,2$ and $\Gamma_r=\Gamma_r^+\cup \Gamma_r^-$, a closed trajectory of the unperturbed system.
\end{cor}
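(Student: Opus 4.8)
The plan is to obtain both displays in the corollary by direct substitution of the continuity hypotheses into the formulas (\ref{p1310}) and (\ref{p1311}) of Theorem \ref{p13T1}, with a single differentiation step to dispose of a second-derivative remainder. The key structural observation is that the points $P=P(r)$ and $P_1=P_1(r)$ both lie on the switching line $y=0$, since they are the intersections of the half-orbits with $L_+$ and $L_-$. Hence the hypothesis $\lim_{y\to 0^+}H_x^+(x,y)=\lim_{y\to 0^-}H_x^-(x,y)$, evaluated at the abscissas of $P$ and $P_1$, yields $H_x^+(P)=H_x^-(P)$ and $H_x^+(P_1)=H_x^-(P_1)$.

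First I would treat $M_1$. With the two equalities above, the outer factor $H_x^+(P)/H_x^-(P)$ and the inner ratio $H_x^-(P_1)/H_x^+(P_1)$ in (\ref{p1310}) both equal $1$, and (\ref{p1310}) collapses at once to (\ref{p1334}).

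Next, for $M_2$ I would substitute the same two ratios into (\ref{p1311}). The left-hand factor $H_x^-(P)/H_x^+(P)$ becomes $1$, so the left side is $M_2(r)$, and on the right the $\omega_2$-, $K$-, and $f_1\omega_1/H_y$-terms acquire the symmetric form of (\ref{p1335}). The only term not yet accounted for is $\tfrac12\big(H_{xx}^-(P_1)-\tfrac{H_x^-(P_1)}{H_x^+(P_1)}H_{xx}^+(P_1)\big)\sigma^2$. To kill it I would differentiate the scalar identity $H_x^+(x,0)=H_x^-(x,0)$---valid for all $x$ by hypothesis---with respect to $x$, obtaining $H_{xx}^+(x,0)=H_{xx}^-(x,0)$, hence $H_{xx}^+(P_1)=H_{xx}^-(P_1)$; combined with $H_x^-(P_1)/H_x^+(P_1)=1$ the bracket vanishes, and (\ref{p1335}) follows. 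This differentiation is the one genuinely non-bookkeeping step, and the only place where the ``for all $x$'' in the hypotheses is essential.

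Finally, for (\ref{p1336}) I would add the continuity of $f_i,g_i$ across $y=0$. Together with the continuity of $H_x,H_y$, the unperturbed field is then continuous across $y=0$, so $\Gamma_r=\Gamma_r^+\cup\Gamma_r^-$ is a genuine closed orbit and each $\omega_i^\pm=g_i^\pm dx-f_i^\pm dy$ is the restriction to a half-orbit of a single one-form $\omega_i=g_i\,dx-f_i\,dy$. Thus each sum $\int_{\Gamma_r^+}+\int_{\Gamma_r^-}$ of matching integrands merges into a loop integral $\oint_{\Gamma_r}$. Moreover, continuity of $H_x,H_y,f_1,g_1$ gives $K^+(P)=K^-(P)=:K(P)$, so the two $K$-weighted terms in (\ref{p1335}) combine into $K(P(r))\oint_{\Gamma_r}\frac{\omega_1}{H_y}$, producing (\ref{p1336}). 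The whole argument is a substitution, the sole subtlety being the vanishing of the $\sigma^2$-term treated above.
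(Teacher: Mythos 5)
Your proposal is correct and follows the same route as the paper, whose proof is the one-line remark that the corollary follows by substituting the continuity hypotheses into (\ref{p1310}) and (\ref{p1311}) of Theorem \ref{p13T1}. In fact you make explicit the one step the paper leaves implicit, namely differentiating $H_x^+(x,0)=H_x^-(x,0)$ in $x$ to get $H_{xx}^+(P_1)=H_{xx}^-(P_1)$ so that the $\sigma^2$-term in (\ref{p1311}) vanishes, which is exactly what is needed.
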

\begin{proof} Proof follows from the equation (\ref{p1310}) and (\ref{p1311}).
	\end{proof}
\section{Piecewise Hamiltonian System with Boundary Perturbation}
Now consider a piecewise Hamiltonian system with boundary perturbation,
\begin{align}\label{p141}
\dot{X}=&\begin{cases}
(H_y^+,-H_x^+), ~y>\varepsilon f(x)\\
(H_y^-,-H_x^-),~ y<\varepsilon f(x)
\end{cases},
\end{align}
where $H^+,H^-:\R^2\rightarrow \R$ are $C^2$ functions and $f:\R \rightarrow \R $ is a $C^1$ function.
\par Here $\Sigma =\{(x,y)\in \mathbb{R}^2: y=\varepsilon f(x)\}$ is a switching manifold and $\Sigma^{\pm}=\{(x,y)\in \mathbb{R}^2: \pm(y-\varepsilon f(x))> 0\}$ are two zones saparated by $\Sigma$.
\begin{remark}
According to the Filippov convension \cite{kuznetsov2003one}, the switching manifold $\Sigma$ is divided into the following regions:
\par Crossing region  $ \Sigma_c=\{(x,\varepsilon f(x)):H_x^{\pm}+\varepsilon H_y^{\pm}f'(x)>0~\mbox{or}~ H_x^{\pm}+\varepsilon H_y^{\pm}f'(x)<0\}$, \par Sliding region $\Sigma_s=\{(x,\varepsilon f(x)):H_x^{+}+\varepsilon H_y^{+}f'(x)<0,H_x^{-}+\varepsilon H_y^{-}f'(x)>0\}$, and \par Escaping region $\Sigma_e=\{(x,\varepsilon f(x)):H_x^{+}+\varepsilon H_y^{+}f'(x)>0,H_x^{-}+\varepsilon H_y^{-}f'(x)<0\}$. 
\par \noindent Discontinuity induced bifurcations are studied according to these regions.
 \end{remark}
\par Using the analytic invertible change of variables $u =x, v =y-\varepsilon f(x)$ and renaming the variables $u$ by $x$ and $v$ by $y$, the system (\ref{p141}) becomes
 \begin{align}\label{p142}
 \dot{X}=& \begin{cases}
 (X^+,Y^+), ~y>0\\
 (X^-,Y^-), ~y<0
 \end{cases},
 \end{align}
 where
 \begin{align}
 X^+=& H_y^++\varepsilon f(x)H^+_{yy}+\varepsilon ^2 \dfrac{1}{2}f(x)^2H^+_{yyy}+o(\varepsilon ^3),\nonumber \\
Y^+=& -H^+_x-\varepsilon (f'(x)H^+_y+f(x)H^+_{xy}) -\varepsilon^2 f(x)\left(\dfrac{1}{2}f(x)H^+_{xyy}+f'(x)H^+_{yy}\right)+o(\varepsilon^3),\nonumber \\
  X^-=&H_y^-+\varepsilon f(x)H^-_{yy}+\varepsilon ^2 \dfrac{1}{2}f(x)^2H^-_{yyy}+o(\varepsilon ^3),\nonumber\\
Y^-=& -H^-_x-\varepsilon (f'(x)H^-_y+f(x)H^-_{xy}) -\varepsilon^2 f(x)\left(\dfrac{1}{2}f(x)H^-_{xyy}+f'(x)H^-_{yy}\right)+o(\varepsilon^3).\nonumber
 \end{align} 
 \begin{remark}
 	Note that if $\Phi(x,y)=(x,y-\varepsilon f(x))$, then $\Phi$ is diffeormorphism. Therefore, systems (\ref{p141}) and (\ref{p142}) are topologically equivalent and their flows are $C^1$ conjugate.\end{remark} 
Melnikov function for the system (\ref{p142}) are given by the following theorem.
\begin{thm} If the system (\ref{p142}) at $\varepsilon =0$ has a period annulus around the origin and   $H_y^{\pm}(P)=H_y^{\pm}(P_1)$, then  the first order Melnikov function for (\ref{p142}) is 
	 \begin{align}\label{p143}
	\lambda M_1(r)={H_x^-(P_1)}\left(\dfrac{H_y^+(P_1)}{H_x^+(P_1)}-\dfrac{H_y^-(P_1)}{H_x^-(P_1)}\right)(f(p)-f(p_1)).
	\end{align} and the second order Melnikov function is given by  \begin{align}\label{p144}
	\lambda M_2(r)=&\dfrac{H_x^-(P_1)}{2}\left(\dfrac{H_{yy}^+(P_1)}{H_x^+(P_1)}-\dfrac{H_{yy}^-(P_1)}{H_x^-(P_1)}\right)((f(p(r)))^2-(f(p_1(r)))^2)\nonumber\\
	&+\dfrac{H_x^-(P_1)K^+(P)-H_x^+(P_1)K^-(P)}{H_x^+(P_1)}\left(f(p(r))-f(p_1(r)) \right)\nonumber \\
	&-\left(K^-(P)\int_{\Gamma^-_r}\dfrac{f}{H_y^-}d(H_y^-)+\dfrac{H_x^-(P_1)K^+(P)}{H_x^+(P_1)}\int_{\Gamma_r^+}\dfrac{f}{H_y^+}d(H_y^+)\right)\nonumber \\
	&+\int_{\Gamma_r^-}H_{yy}^-\left[d\left(\frac{f^2}{2}\right)+\left(\frac{f^2}{{H_y}^-}\right)d(H_y^-)\right]+\dfrac{H_x^-(P_1)}{H_x^+(P_1)}\int_{\Gamma_r^+}H_{yy}^+\left[d\left(\frac{f^2}{2}\right)+\left(\frac{f^2}{H_y^+}\right)d(H_y^+)\right]\nonumber \\
	&+\dfrac{1}{2}\left(H_{xx}^-(P_1)-\dfrac{H_x^-(P_1)}{H_x^+(P_1)}H_{xx}^+(P_1)\right)\sigma^2,
	\end{align}
	where $\lambda=\dfrac{H_x^-(P)}{H_x^+(P)}.$
\end{thm}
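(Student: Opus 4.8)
The plan is to recognize that system (\ref{p142}) is nothing but a special instance of the perturbed system (\ref{p139}), so that Theorem \ref{p13T1} applies verbatim once the perturbation data are read off. Comparing the $\varepsilon$-expansions of $X^{\pm},Y^{\pm}$ with (\ref{p139}), I would first record
\begin{align*}
f_1^{\pm}=f H_{yy}^{\pm},\quad g_1^{\pm}=-\left(f' H_y^{\pm}+f H_{xy}^{\pm}\right),\quad f_2^{\pm}=\tfrac{1}{2}f^2 H_{yyy}^{\pm},\quad g_2^{\pm}=-f\left(\tfrac{1}{2}f H_{xyy}^{\pm}+f' H_{yy}^{\pm}\right),
\end{align*}
where $f=f(x)$. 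With these in hand the entire theorem reduces to substituting into (\ref{p1310}) and (\ref{p1311}) and simplifying; no new dynamical input is needed beyond what Theorem \ref{p13T1} already supplies.

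The crux of the simplification is the observation that both perturbation one-forms are exact. Using $dH_y^{\pm}=H_{xy}^{\pm}dx+H_{yy}^{\pm}dy$ and $dH_{yy}^{\pm}=H_{xyy}^{\pm}dx+H_{yyy}^{\pm}dy$, a direct differentiation gives
\begin{align*}
\omega_1^{\pm}=g_1^{\pm}dx-f_1^{\pm}dy=-d\left(f H_y^{\pm}\right),\qquad \omega_2^{\pm}=g_2^{\pm}dx-f_2^{\pm}dy=-d\left(\tfrac{1}{2}f^2 H_{yy}^{\pm}\right).
\end{align*}
These two identities are what turn the line integrals along $\Gamma_r^{\pm}$ into pure boundary evaluations at $P$ and $P_1$; for instance $\int_{\Gamma_r^+}\omega_1^+=f(p)H_y^+(P)-f(p_1)H_y^+(P_1)$, with the orientation of $\Gamma_r^-$ (from $P_1$ to $P$) producing the opposite sign. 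I would also record the auxiliary identities $\frac{\omega_1^{\pm}}{H_y^{\pm}}=-df-\frac{f}{H_y^{\pm}}dH_y^{\pm}$ and $-\frac{f_1^{\pm}\omega_1^{\pm}}{H_y^{\pm}}=H_{yy}^{\pm}\bigl[d(\tfrac{f^2}{2})+\frac{f^2}{H_y^{\pm}}dH_y^{\pm}\bigr]$, since these are exactly the combinations appearing in (\ref{p144}).

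For the first-order function I would substitute the boundary values into (\ref{p1310}); the hypothesis $H_y^{\pm}(P)=H_y^{\pm}(P_1)$ lets me factor $\bigl(f(p)-f(p_1)\bigr)$ out of each half-integral, and collecting the coefficients of $H_x^{\pm}(P_1)$ yields (\ref{p143}) after multiplying by $\lambda=H_x^-(P)/H_x^+(P)$. For the second-order function, assuming $M_1\equiv 0$, I would process (\ref{p1311}) group by group. The $K^{\pm}$ terms split via $\frac{\omega_1^{\pm}}{H_y^{\pm}}=-df-\frac{f}{H_y^{\pm}}dH_y^{\pm}$ into a boundary part reproducing the second line of (\ref{p144}) and a remaining integral giving the third line. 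The $\omega_2^{\pm}$ and $\frac{f_1^{\pm}\omega_1^{\pm}}{H_y^{\pm}}$ contributions are merged using the second auxiliary identity, which supplies the fourth line, while the exact form of $\omega_2^{\pm}$ delivers the boundary term of the first line; the $\sigma^2$ term is carried over unchanged as the last line.

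I expect the main obstacle to be precisely this last combination. Integrating $\int_{\Gamma_r^{\pm}}H_{yy}^{\pm}\,d(\tfrac{f^2}{2})$ by parts throws off boundary terms $\tfrac12 f^2 H_{yy}^{\pm}$ evaluated at $P$ and $P_1$, and these must be reconciled with the boundary values coming from $\int_{\Gamma_r^{\pm}}\omega_2^{\pm}=\tfrac12\bigl(f(p)^2 H_{yy}^{\pm}(P)-f(p_1)^2 H_{yy}^{\pm}(P_1)\bigr)$ so as to leave exactly the symmetric coefficient $\bigl(f(p)^2-f(p_1)^2\bigr)H_{yy}^{\pm}(P_1)$ of the first line. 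This reconciliation is where the evaluation hypothesis and the smoothness of $H^{\pm}$ up to $y=0$ are genuinely used, so I would keep the $+$ and $-$ contributions strictly separate, reduce each to boundary values plus the two irreducible integrals $\int_{\Gamma_r^{\pm}}\frac{f}{H_y^{\pm}}dH_y^{\pm}$ and $\int_{\Gamma_r^{\pm}}H_{yy}^{\pm}\bigl[d(\tfrac{f^2}{2})+\frac{f^2}{H_y^{\pm}}dH_y^{\pm}\bigr]$, and only at the very end divide through by $\lambda$ to land on (\ref{p143}) and (\ref{p144}).
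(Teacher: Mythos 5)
Your proposal is correct and takes essentially the same route as the paper: the paper's proof likewise treats (\ref{p142}) as an instance of (\ref{p139}), uses the exactness identities $\omega_1^{\pm}=-d\left(fH_y^{\pm}\right)$ and $\omega_2^{\pm}=-d\left(\tfrac{1}{2}f^{2}H_{yy}^{\pm}\right)$ to reduce the line integrals to boundary evaluations, substitutes into (\ref{p1310}) and (\ref{p1311}) of Theorem \ref{p13T1}, and invokes $H_y^{\pm}(P)=H_y^{\pm}(P_1)$ to factor out $f(p)-f(p_1)$. If anything, you supply more detail than the paper, whose treatment of $M_2$ consists of stating the two exactness identities and asserting that the formula ``follows from (\ref{p1311}).''
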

  \begin{proof} 
 From (\ref{p142}), we have
  \begin{align}
 \int_{\Gamma^+_r}\omega_1^+=&\int_{\Gamma^+_r}(g_1^+dx-f_1^+dy)=-\int_{\Gamma^+_r}d(fH_y^+)=f(p)H_y^+(P)-f(p_1)H_y^+(P_1),~\mbox{and}\nonumber\\
 \int_{\Gamma^-_r}\omega_1^-=&\int_{\Gamma^-(r)}(g_1^-dx-f_1^-dy)=-\int_{\Gamma_r^-}d(fH_y^-)=f(p_1)H_y^-(P_1)-f(p)H_y^-(P).\nonumber
 \end{align} 
 Hence, by Theorem \ref{p13T1},
 \begin{align}\label{p145}
 M_1(r)=&\dfrac{H_x^+(P)}{H_x^-(P)}\left( f(p)\left(\dfrac{ H_x^-(P_1)H_y^+(P)}{H_x^+(P_1)}-H_y^-(P)\right)-f(p_1)\left( \dfrac{ H_x^-(P_1)H_y^+(P_1)}{H_x^+(P_1)}-H_y^-(P_1)\right)\
 \right).
 \end{align}
 Now if $H_y^+(P)=H_y^+(P_1)$ and $H_y^-(P)=H_y^-(P_1)$, then 
 we get (\ref{p143}).
 \par Again, from (\ref{p142}) we have
 $ \omega_1^{\pm}=-d(f(x)H_y^{\pm}),\omega_2^{\pm}=-d\left(\dfrac{f^2(x)H_{yy}^{\pm}}{2}\right).$
 Hence, the formula for $M_2$ follows from (\ref{p1311}).\end{proof}
 \begin{remark}
 If the system (\ref{p142}) at $\varepsilon =0$ is smooth, then $\dfrac{H_y^+(P_1)}{H_x^+(P_1)}-\dfrac{H_y^-(P_1)}{H_x^-(P_1)}=0$. 
 Thus, $M_1(r)=0$ if and only if $f(p(r))=f(p_1(r))$ or the system is smooth. If $f(p(r))=f(p_1(r))$, the periodic orbit passing through $P(r)$ of (\ref{p142}) at $\varepsilon =0$ is also a periodic orbit of (\ref{p142}). Hence, the number of periodic orbits persists under perturbation equals to the number of roots of $f(p(r))-f(p_1(r))$.
 \end{remark} 
 If the system (\ref{p141}) at $\varepsilon =0$ is smoothly extended on $y=0$, then its period annulus persists under perturbation of switching boundary. In the following corollary we obtain the first order and second order Melnikov functions for such system. 
 \begin{cor}\label{p14C2}
 	If the system (\ref{p141}) at $\varepsilon =0$ is smooth then its period annulus persists under smooth perturbation of switching manifold $y=0$.
 \end{cor}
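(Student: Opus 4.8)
The plan is to read the conclusion off the Melnikov formulas (\ref{p143}) and (\ref{p144}) of the preceding theorem, in the same way that Corollary \ref{p13C1} was deduced from (\ref{p1310})--(\ref{p1311}). First I would make the hypothesis precise: saying that (\ref{p141}) is smooth at $\varepsilon=0$ means that the two Hamiltonian fields $(H_y^+,-H_x^+)$ and $(H_y^-,-H_x^-)$ glue across the line $y=0$ into a single $C^1$ field, i.e. the matching conditions of Corollary \ref{p13C1} hold along $y=0$. In particular $H_x^+=H_x^-$ and $H_y^+=H_y^-$ (and, from $C^1$-smoothness of the field, $H_{xx}^+=H_{xx}^-$, $H_{xy}^+=H_{xy}^-$, $H_{yy}^+=H_{yy}^-$) hold at every point of $\{y=0\}$, hence at the return points $P=P(r)$ and $P_1=P_1(r)$ for all $r$.

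Granting this, the first-order statement is immediate: in (\ref{p143}) the bracket $\frac{H_y^+(P_1)}{H_x^+(P_1)}-\frac{H_y^-(P_1)}{H_x^-(P_1)}$ vanishes because $P_1\in\{y=0\}$, so $M_1(r)\equiv 0$ for every $r$. By Proposition \ref{p13P1}(1) no limit cycle can bifurcate from any $\Gamma_r$, and the dichotomy in the Remark preceding this corollary degenerates: its second alternative (the system is smooth) holds for all $r$ at once, so every periodic orbit of the unperturbed annulus survives to first order. Next I would feed the same matching identities into (\ref{p144}): the three jump coefficients $\big(\frac{H_{yy}^+}{H_x^+}-\frac{H_{yy}^-}{H_x^-}\big)(P_1)$, $\big(H_x^-(P_1)K^+(P)-H_x^+(P_1)K^-(P)\big)$ and $\big(H_{xx}^-(P_1)-\frac{H_x^-(P_1)}{H_x^+(P_1)}H_{xx}^+(P_1)\big)$ all collapse to $0$, while the surviving line integrals of $\frac{f}{H_y}\,dH_y$ and of $H_{yy}\big[d(\tfrac{f^2}{2})+\tfrac{f^2}{H_y}\,dH_y\big]$ over $\Gamma_r^+$ and $\Gamma_r^-$ should recombine, using continuity of $H_y$ and $H_{yy}$ at the junctions $P,P_1$, into integrals around the closed orbit $\Gamma_r=\Gamma_r^+\cup\Gamma_r^-$ that cancel. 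This gives $M_2(r)\equiv 0$, and the same bookkeeping continued to higher order should force the displacement map $H^+(Q)-H^+(P)$ to vanish identically in $\varepsilon$, which is exactly persistence of the period annulus.

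A cleaner, coordinate-free route that I would run in parallel (and probably present as the main argument) is geometric: if the two branches coincide as smooth fields not merely on $y=0$ but on a neighbourhood of the annulus, then for small $\varepsilon$ the shifted switching curve $y=\varepsilon f(x)$ lies inside that neighbourhood, so on either side of it the system uses the very same vector field; the flow is therefore independent of $\varepsilon$ near the annulus and each $\Gamma_r$ remains a periodic orbit verbatim. The hard part is the bridge from $M_1\equiv 0$ to genuine persistence of the whole annulus, since first-order vanishing alone does not preclude higher-order splitting. I would either upgrade the hypothesis to coincidence on a neighbourhood, making the geometric argument rigorous, or else carry out the cancellation of the one-form integrals in (\ref{p144}) --- the step I expect to be most delicate, handled by integration by parts as in Lemma \ref{p13L2} together with the identities $\omega_i^{\pm}=-d(\cdots)$ already used in the theorem's proof, exploiting that the integrands agree at $P$ and $P_1$ so the $+$ and $-$ contributions telescope.
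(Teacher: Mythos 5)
Your reduction step is fine and matches the paper: under the matching conditions along $y=0$ the bracket in (\ref{p143}) vanishes, so $M_1\equiv 0$ (the paper obtains the same thing as $\oint_{\Gamma_r}\omega_1=\oint_{\Gamma_r}-d(fH_y)=0$), the three jump coefficients in (\ref{p144}) collapse, and the half-orbit integrals glue into the closed-orbit expression $\lambda M_2(r)=-K(P)\oint_{\Gamma_r}\frac{f}{H_y}\,dH_y+\oint_{\Gamma_r}H_{yy}\frac{f^2}{H_y}\,dH_y+\oint_{\Gamma_r}H_{yy}\,d\left(\frac{f^2}{2}\right)$, which is exactly the paper's (\ref{p146}). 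But that recombination is the trivial part, and your proposed mechanism for what remains --- ``the $+$ and $-$ contributions telescope'' --- does nothing once the loop is closed: the three closed-orbit integrals are not integrals of exact forms and do not vanish for free. The entire content of the paper's proof is two nontrivial identities. First, along $\Gamma_r$ one has $dx=-(H_y/H_x)\,dy$, whence $\frac{f}{H_y}\,dH_y=\frac{\partial}{\partial y}\left(f\log\frac{H_y}{H_x}\right)dy$, which kills the first integral (equation (\ref{p147})); a naive integration by parts of $\oint f\,d(\log H_y)$ instead leaves $-\oint f'(x)\log(H_y)\,dx$, with no visible reason to vanish --- the $H_x$ correction inside $\log(H_y/H_x)$ is essential. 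Second, the remaining two integrals combine as $\oint H_{yy}\left(f^2v_y-ff'u\right)dy$ with $u=H_y/H_x$, $v=\log u$, and are killed only after integrating by parts twice (equation (\ref{p148})). Note moreover that $H_y$ vanishes at the points of $\Gamma_r$ with vertical tangent, so these integrals are improper there --- the paper even appends a remark extending $\log$ to $\pm\infty$ to cope --- a difficulty your sketch never touches. So the decisive second-order computation, which is the proof, is missing from your proposal.

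Your parallel geometric route proves a different statement. For $C^2$ Hamiltonians, smoothness of the glued field at $\varepsilon=0$ forces $H^{\pm}$ and the relevant derivatives to agree only on the line $y=0$; it does not make $H^+$ and $H^-$ coincide on a neighbourhood of the annulus, so for $\varepsilon\neq 0$ the strip between $y=0$ and $y=\varepsilon f(x)$ genuinely carries the other branch's field and the flow does depend on $\varepsilon$. Upgrading the hypothesis to coincidence on a neighbourhood, as you concede you would need, trivializes the problem but no longer proves the corollary as stated. On the credit side, your worry about the bridge from $M_1\equiv M_2\equiv 0$ to persistence of the whole annulus is legitimate and in fact more careful than the paper itself, whose proof stops at second order and simply concludes that no limit cycle bifurcates; but that observation does not repair the missing cancellation argument above.
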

\begin{proof} From (\ref{p143}) we have $\displaystyle M_1(r)=\oint_{\Gamma_r}\omega_1=\oint_{\Gamma_r}-d(f(x)H_y)=0.$
 \par Now from (\ref{p144}) we have	\begin{align}\label{p146} \lambda M_2(r)=-K(P)\oint_{\Gamma_r}\frac{f}{H_y}dH_y+\oint_{\Gamma_r}H_{yy}\frac{f^2}{H_y}dH_y+\oint_{\Gamma_r}H_{yy}d\left(\frac{f^2}{2}\right),
 	\end{align}
 	where $f, K, H_y, H_{yy}$ and $\Gamma_r$ are as defined in Corollary \ref{p13C1}.
 \par Along $\Gamma_r$ we have \begin{align} \label{p147} \dfrac{f(x)}{H_y}dH_y=&\dfrac{f(x)}{H_y}(H_{yx}dx+H_{yy}dy)=\dfrac{f(x)}{H_y}(-H_{yx}\dfrac{H_y}{H_x}+H_{yy})dy\nonumber \\ 
 =&f(x)\left(\dfrac{-H_{xy}}{H_x}+\dfrac{H_{yy}}{H_y}\right)dy=\dfrac{\partial}{\partial y}\left(f(x)\log \left(\frac{H_y}{H_x}\right)\right)dy,
 \end{align}
 so that $\displaystyle \oint_{\Gamma_r}\frac{f}{H_y}dH_y=0. $
Also,
 \begin{align*}
\oint H_{yy}\left(f^2\frac{dH_y}{H_y}+d\left(\frac{f^2}{2}\right)\right)
=\oint H_{yy}\left(f^2\frac{\partial}{\partial y}\left(\log \frac{H_y}{H_x}\right)-ff'\frac{H_y}{H_x}\right)dy
=\oint  H_{yy}\left(f^2v_y-ff'u\right)dy,  
\end{align*}
where $u=\frac{H_y}{H_x}~~\mbox{and}~~v=\log(u)$.
Integrating by parts twice, we get
\begin{align}\label{p148}
&\oint H_{yy}\left(f^2\frac{dH_y}{H_y}+d\left(\frac{f^2}{2}\right)\right)
=\oint  H\left( f^2v_{yyy}-ff'u_yy\right)dy
=r\oint \left( (f^2v)_{yyy}-(ff'u)_{yy}\right)dy=0.
\end{align}
Hence from equation (\ref{p146}), (\ref{p147}) and (\ref{p148}), we get $M_2\equiv 0$.
Thus, we conclude that, no limit cycle is bifurcated from the period annulus of (\ref{p141}).\end{proof}
  \begin{remark}
  	In the above proof we consider an extension of the natural logarithmic function on $\R \cup \{+\infty,-\infty\} $ as, $\log (x)= \begin{cases}
  \ln x, x>0\\ \ln(-x), x<0\\ -\infty , x=0\\ \infty , x=\pm \infty
  	\end{cases}.$ This function is an antiderivative of the functions $f_1(x)=\begin{cases}\dfrac{1}{x}, x>0 \\ \infty  ,x=0\end{cases}$ on $[0,\infty]$ and $f_2(x)=\begin{cases}\dfrac{1}{x}, x<0 \\ -\infty  ,x=0\end{cases}$ on $[-\infty,0]$.
  	\end{remark}
  
   \section{Applications}
   There are various types of planar piecewise smooth systems according to the types of singularities in zones separated by the switching line with center at the origin viz. center-center, saddle-center, center-focus. Here we discuss the limit cycle bifurcation from period annulus due to perturbation of the switching manifold of center- center and saddle-center type using the first and second order Melnikov functions. 
   \subsection{Boundary perturbation of center-center type system}
   Consider the piecewise Hamiltonian system \begin{align}\label{p1500}
   \dot{X}=\begin{cases}
   (-1,2x)& \mbox{if}~ y>0\\ (1,2x)& \mbox{if}~ y<0
   \end{cases}.
   \end{align}
   System (\ref{p1500}) has center at the origin (Fig.\ref{fig51}). Hamiltonian of the system $\dot{X}=(-1,2x)$ is $H^+(x,y)=-y-x^2$. Trajectories of this system at level $h=r^2$ are given by $y+x^2=-r^2$. Hamiltonian for $\dot{X}=(1,2x)$ is $H^-(x,y)=y-x^2$ and its trajectories at the levels $h=r^2$ are given by $y-x^2=r^2.$
   \par Now consider the perturbed piecewise smooth system
    \begin{align}\label{p1501}
   \dot{X}=\begin{cases}
   (-1+\varepsilon f_1^+(x,y)+\varepsilon^2f_2^+(x,y),2x+\varepsilon g_1^+(x,y)+\varepsilon^2g_2^+(x,y))& \mbox{if} ~y>0\\ (1+\varepsilon f_1^-(x,y)+\varepsilon^2f_2^-(x,y),2x+\varepsilon g_1^-(x,y)+\varepsilon^2g_2^-(x,y))& \mbox{if}~y<0,
   \end{cases}
   \end{align} 
   \begin{align*}
  \mbox{where}& &f_1^{+}(x,y)=ax^2+bxy+cy^2,&~~~ f_2^{+}(x,y)=Ax^2+Bxy+Cy^2,\\ & &g_1^{+}(x,y)=dx^2+exy+fy^2,& ~~~ g_2^{-}(x,y)=Dx^2+Exy+Fy^2, \\
  & & f_1^{-}(x,y)=px^2+qxy+s y^2,&~~~ f_2^{-}(x,y)=Px^2+Qxy+Sy^2,\\ 
  & &g_1^{-}(x,y)=lx^2+mxy+ny^2,&~~~ g_2^{-}(x,y)=Lx^2+Mxy+Ny^2.
   \end{align*}
 and $a,b,c,d,e,f,p,q,s,l,m,n,A,B,C,D,E,F,P,Q,S,L,M$ and $N$ are real constants. 
   \par From (\ref{p1310}) we get \begin{align}\label{p1502}
  M_1(r)=r^3\left( \dfrac{8}{15}\left(4b-7f+7n-4q \right) {r}^{2}+ \dfrac{2}{3}\left( l-d \right) \right)
  .\end{align}
  Note that, $M_1(r)\equiv 0$ if \begin{align}\label{p1503}
    4b-7f+7n-4q =0,~\mbox{and}~ d-l=0.\end{align}
    Also, from (\ref{p1310}) the second order Melnikov function is
    \begin{align}\label{p1504}
   M_2(r)=& \left( -{\dfrac {2656\,sn}{315}}+{\dfrac {3712\,qs}{315}}+{\dfrac {3712
   		\,bc}{315}}-{\frac {2656\,cf}{315}} \right) {r}^{9}\nonumber \\
   	&+ \left( \dfrac{8}{15}(p
   \left( 7\,n-4\,q \right)-a \left( 4\,b-7\,f \right) ) - \dfrac {184}{105}(ls+qm+np+af+be+cd)+\dfrac {96}{35}(pq+ab) \right) {r}^{7}\nonumber \\
   &- \dfrac{4}{15} \left( l \left( 7\,n-4\,q \right) +d \left( 4\,b-7\,f \right) 
   \right) {r}^{6}+ \dfrac{4}{15}\left( 14(N-F)+8(B-Q)+lp+ad \right) {r}^{5}\nonumber \\
   &+\dfrac{1}{3} \left( d^2-l^2 \right) {r
   }^{4}+\dfrac{2}{3} \left( L-D \right) {r}^{3}.
   \end{align}
   In the view of the conditions (\ref{p1503}), the expression (\ref{p1504}) becomes $M_2(r)=r^3M_2'(h)$, where $h=r^2$ and
   \begin{align}\label{p1505}
   M_2'(h)=&\dfrac{32}{315}\left(-29(7f-4b)(s+c)+120(sn+cf)\right)h^3\nonumber \\
   &+\left(\dfrac{8}{15}(7f-4b)(p+a)-\dfrac{184}{105}(d(s+c)+qm+np+af+be)+\dfrac{96}{35}(pq+ab)\right)h^2\nonumber \\
   &+\dfrac{4}{15}\left( 14(N-F)+8(B-Q)+d(p+a) \right)h+\dfrac{2}{3} \left( L-D \right).
   \end{align}
   Thus under the conditions (\ref{p1503}), the system (\ref{p1501}) can have at most three limit cycles.
   \par In particular, if $a=p,b=q,c=s,m=-e, f=n=2b,L=-D, F=N, B=Q$, then  (\ref{p1505})  becomes
   \begin{align}\label{p1506}
   M'_2(h)=&-\dfrac{640}{63}bch^3+\left(\dfrac{80}{21}ab+\dfrac{16}{3}b^2-\dfrac{368}{105}cd\right)h^2+\dfrac{8}{15}adh-\dfrac{4}{3}D.   \end{align}
   We can choose constants $a,b,c,d$ and $D$ such that (\ref{p1506}) will have three distinct positive roots.
  In particular, if $a=1, bc=1,d=-\dfrac{8800}{42}$ and $D=-\dfrac{960}{21}$, then (\ref{p1506}) becomes
  \begin{align}\label{p15060}
  M_2'(h)=-\dfrac{640}{63}(h^3-6.000000005h^2+11h-6).
  \end{align} Polynomial (\ref{p15060}) has three positive zeros; $h= 1.000000003, 1.999999980, 3.000000022$.  Consequently, the corresponding system will have three limit cycles (Fig.\ref{fig53}).

  \par Further, in (\ref{p1505}), if $4b=7f, e=m=c=0,a=p=s=n=1,q=-b, d=1/10, f=98.9, L=D, 14(N-F)+8(B-Q)=0$, then 
   \begin{align}\label{p1507}
   M'_2(h)=\dfrac{256}{21}h^3-\dfrac{3680}{21}h^2+\dfrac{4}{75}h.
   \end{align} 
    Note that $(\ref{p1507})$ has two real positive zeros, $h = \dfrac{115}{16}+\dfrac{3}{80}\sqrt{36733}, \dfrac{115}{16}-\dfrac{3}{80}\sqrt{36733}$. Therefore the corresponding system will have two limit cycles (Fig.\ref{fig52}).

  \begin{figure}
  	\centering
  	\begin{subfigure}{.4\textwidth}
  		\includegraphics[width=.4\linewidth]{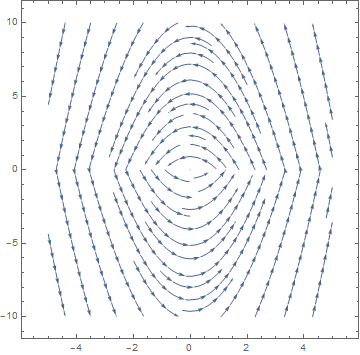}
  		\caption{System (\ref{p1500})}
  		\label{fig51}
  	\end{subfigure}%
  	\begin{subfigure}{.4\textwidth}
  		\includegraphics[width=.4\linewidth]{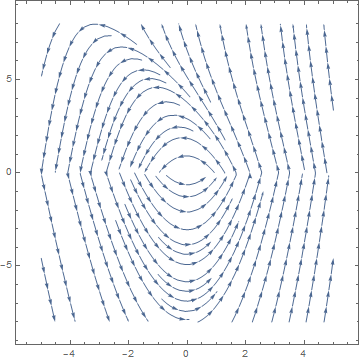}
  		\caption{Two limit cycles}
  		\label{fig52}
  	\end{subfigure}%
  	\begin{subfigure}{.5\textwidth}
  	\includegraphics[width=.4\linewidth]{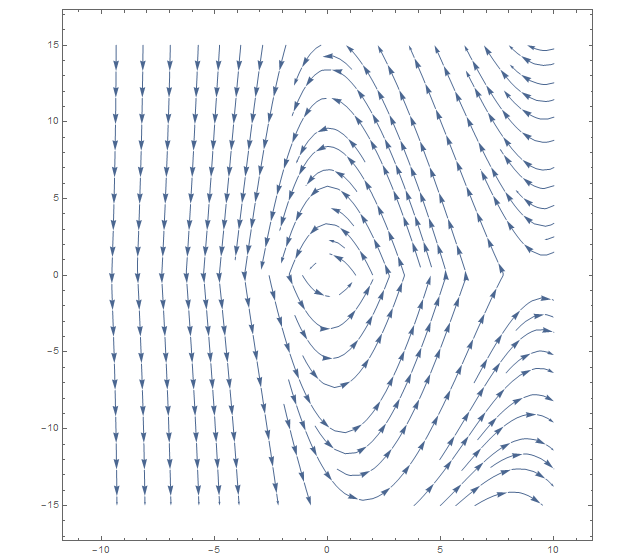}
  	\caption{Three limit cycles}
  	\label{fig53}
  \end{subfigure}%
  	\caption{Flow of the system (\ref{p1501})}
  	\label{fig523}
  \end{figure}

   \par In \cite{wei2016normal}, authers characterize all planar piecewise smooth differential systems having $(k,l)$ center at the origin. Here we mention the result;
   \begin{pr}\label{p15P1}\cite{wei2016normal} Let $k,l,r$ be positive integers and $\max\{k,l \}\leq r\in \N\cup \{\infty, \omega \} $.
   	Suppose that the system \begin{align}\label{p151}
   	\dot{X}=\begin{cases}
   	(F^+(x,y),G^+(x,y))& \mbox{if}~y>0\\ (F^-(x,y),G^-(x,y))& \mbox{if}~y<0
   	\end{cases}\end{align} is piecewise smooth with $F^{\pm}, G^{\pm}\in C^r$ and having $(k,l)$-$\Sigma$-center at the origin, where $\Sigma$ is the $x$-axis. Then there exists a $C^r$ diffeomorphism $h$ from period annulus of (\ref{p151}) to a period annulus of (\ref{p1500}),  which maps $x$-axis to the $x$-axis.
   \end{pr}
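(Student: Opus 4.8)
The plan is to build $h$ by describing the period annulus $\mathcal{A}$ of (\ref{p151}) as a foliation by closed orbits, straightening this foliation onto that of the model (\ref{p1500}) on each side of $\Sigma$ separately, and then gluing the two halves into a single $C^r$ map. The first observation I would record is that the center (the origin) is a $(k,l)$-contact singularity and hence is \emph{not} a point of the period annulus, so $\mathcal{A}$ is an open region free of singular points of $X^{\pm}$, and every periodic orbit $\gamma_s$ of (\ref{p151}) meets $\Sigma$ transversally in exactly two points $A(s)\in L_+$ and $B(s)\in L_-$. Writing $\gamma_s=\gamma_s^+\cup\gamma_s^-$ with $\gamma_s^{\pm}$ the arc in $\{\pm y\ge 0\}$, I would parametrise orbits by $s=$ the $x$-coordinate of $A(s)$, so that $s\mapsto A(s)$ identifies $L_+$ with an interval and the half-return map $B(s)=\mathcal{P}^+(A(s))$ is a $C^r$ diffeomorphism onto $L_-$ by the flow-box theorem applied along each arc. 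It is worth stressing at the outset that the contact orders $k,l$ govern only the asymptotics of $A(s),B(s)$ as $s\to 0$, i.e. the behaviour near the \emph{deleted} center, and are therefore immaterial for a diffeomorphism of the open annulus; this is why a single model (\ref{p1500}) suffices for all $(k,l)$.

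Next I would straighten the upper half. Over $\{y\ge 0\}$ the model orbits are the parabolic arcs $y+x^2=c$ crossing $\Sigma$ at $(\pm\sqrt{c},0)$. I would define $h^+\colon \mathcal{A}\cap\{y\ge 0\}\to\{\text{model upper region}\}$ by sending each $\gamma_s^+$ to the parabola with $c=\rho(s)^2$, where $\rho$ is a fixed $C^r$ increasing reparametrisation to be pinned down below, and by using normalised flow time as the coordinate transverse to the levels. Concretely, if $\Phi^+_t$ is the flow of $X^+$ and $T^+(s)$ the transit time of $\gamma_s^+$ from $A(s)$ to $B(s)$, I send $\Phi^+_{\theta T^+(s)}(A(s))$ (for $0\le\theta\le 1$) to the model point at the same normalised time $\theta$ along $y+x^2=\rho(s)^2$. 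The flow-box theorem guarantees that $(s,\theta)$ are $C^r$ coordinates on $\mathcal{A}\cap\{y\ge 0\}$ and that $h^+$ is a $C^r$ diffeomorphism carrying $L_+$ to $\{(\rho(s),0)\}$ and $L_-$ to $\{(-\rho(s),0)\}$. I would build $h^-$ on $\{y\le 0\}$ in exactly the same way, using the flow of $X^-$, the \emph{same} label $s$ and the \emph{same} $\rho$, mapping $\gamma_s^-$ onto the model lower arc $y-x^2=-\rho(s)^2$.

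Finally I would glue: set $h=h^+$ on $\{y\ge 0\}$ and $h=h^-$ on $\{y\le 0\}$. By construction both pieces induce the same map of $\Sigma$, namely $A(s)\mapsto(\rho(s),0)$ and $B(s)\mapsto(-\rho(s),0)$, so $h$ is a well-defined homeomorphism of $\mathcal{A}$ onto the model annulus taking orbits to orbits and $\Sigma$ to $\Sigma$; in particular it maps the $x$-axis to the $x$-axis as required. The hard part is to make this glued map genuinely $C^r$ across $\Sigma$: the tangential derivatives already agree because $h^+|_\Sigma=h^-|_\Sigma$, but the derivatives in the direction transverse to $\Sigma$ must also match to all orders $\le r$ from above and below along $L_+\cup L_-$. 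I would secure this by exploiting the remaining freedom in the two normalisations — the choice of $\rho$ and of the transverse coordinate $\theta$ on each side — together with a one-sided Taylor/Whitney-extension argument along $L_{\pm}$: since the crossings are transversal, $\Sigma$ is a $C^r$ section for both flows, and one can pre-compose $h^-$ with a $C^r$ diffeomorphism supported near $\Sigma$ that corrects the finitely many jets of the normal derivatives so that they agree with those of $h^+$, without disturbing the orbit- and $\Sigma$-preserving properties. Verifying this jet-matching is the main technical obstacle; once it is carried out, $h$ is the desired $C^r$ diffeomorphism, and the argument applies verbatim for every admissible $(k,l)$.
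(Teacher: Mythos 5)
Your construction --- straighten each half-annulus in orbit-label/normalized-flow-time coordinates $(s,\theta)$, send each half-orbit of (\ref{p151}) onto the corresponding parabolic arc of (\ref{p1500}), and glue along $\Sigma$ --- is essentially the same device this paper uses when it proves the analogous saddle-center statement, Proposition \ref{p15P4} (there via polar coordinates, the maps $\Phi^{\pm}$, $\Psi^{\pm}$, and gluing along $y=0$); for Proposition \ref{p15P1} itself the paper gives no proof at all, deferring entirely to \cite{wei2016normal}. Your preliminary reductions are sound: on the open period annulus the crossings with $\Sigma$ are transversal, so the transit times $T^{\pm}(s)$ and half-return maps are $C^r$ by the implicit function theorem, and the contact orders $(k,l)$ create no obstruction away from the deleted center.

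The genuine gap is exactly the step you flag as ``the main technical obstacle'' and then dispose of by fiat: pre-composing $h^-$ with a jet-correcting diffeomorphism supported near $\Sigma$ ``without disturbing the orbit- and $\Sigma$-preserving properties.'' That last clause is where the argument fails. Since the orbits of $X^-$ cross $\Sigma$ transversally, any correction that alters the transverse jets of $h^-$ along $L_{\pm}$ moves points off the prescribed model parabolas, so it generically destroys the orbit-to-orbit property --- and that property is indispensable, because the paper invokes Proposition \ref{p15P1} precisely to transport Proposition \ref{p15P2} to the systems (\ref{p151}); a bare diffeomorphism of annuli would be vacuous. A Whitney-extension argument does not help for the same reason: the admissible freedom is only the relabeling $\rho(s)$ and side-wise time reparametrizations $\theta\mapsto\Theta(s,\theta)$, and one must prove that these suffice to match \emph{all} transverse jets up to order $r$ simultaneously along both curves $L_+$ and $L_-$, two families of conditions coupled through the half-return map. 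That simultaneous jet-matching is the actual content of the theorem in \cite{wei2016normal}, where it is carried out with the normal-form machinery attached to the half-return maps; without it, your glued $h$ is only a homeomorphism that is $C^r$ on each closed half-annulus separately. The skeleton is right, but the statement's real content is left unproven.
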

   
   We note that the system (\ref{p1500}) is  piecewise smooth Hamiltonian system with Hamiltonian $H^+(x,y)=-y-x^2, y> 0$ and $H^-(x,y)=y-x^2,y< 0$.
   The following proposition gives the information about the limit cycles bifurcated from period annulus of this system due to perturbation of switching manifold.
   \begin{pr}\label{p15P2}
   	The number of limit cycles for the system $\dot{X}=\begin{cases}
   	(-1,2x)& \mbox{if}~y>\varepsilon f(x)\\
   	(1,2x)& \mbox{if}~ y<\varepsilon f(x)
   	\end{cases}$ is equal to the number of isolated positive zeros of $f_o(x)$, where $f_o(x)=\dfrac{f(x)-f(-x)}{2}$.
   	
   \end{pr}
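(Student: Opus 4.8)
The plan is to recognise the system as the boundary--perturbed center--center system, namely the special case of \eqref{p141} with $H^+(x,y)=-y-x^2$ and $H^-(x,y)=y-x^2$ (these generate precisely the fields $(-1,2x)$ and $(1,2x)$), and then to read its first order Melnikov function off the formula \eqref{p143}; the diffeomorphism $\Phi(x,y)=(x,y-\varepsilon f(x))$ converts it to the form \eqref{p142} and carries limit cycles to limit cycles. First I would record the unperturbed geometry: at $\varepsilon=0$ the system is \eqref{p1500}, whose orbit through $(r,0)$ with $r>0$ is the upper arc $y=r^2-x^2$ from $P=(r,0)$ to $P_1=(-r,0)$ together with the lower arc back, so that $p(r)=r$ and $p_1(r)=-r$. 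Since $H_x^{\pm}=-2x$, $H_y^+=-1$ and $H_y^-=1$, the functions $H_y^{\pm}$ are constant, so the hypothesis $H_y^{\pm}(P)=H_y^{\pm}(P_1)$ needed for \eqref{p143} holds automatically and the period annulus around the origin is present.

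Next I would compute $M_1$. Evaluating at $P=(r,0)$ and $P_1=(-r,0)$ gives $H_x^{\pm}(P)=-2r$ and $H_x^{\pm}(P_1)=2r$, whence $\lambda=H_x^-(P)/H_x^+(P)=1$ and $\frac{H_y^+(P_1)}{H_x^+(P_1)}-\frac{H_y^-(P_1)}{H_x^-(P_1)}=-\frac1r$. Substituting into \eqref{p143} together with $f(p)-f(p_1)=f(r)-f(-r)=2f_o(r)$ gives $M_1(r)=-4f_o(r)$. In particular the positive zeros of $M_1$ are exactly the positive zeros of $f_o$.

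I would then convert this into a count of limit cycles: by Proposition~\ref{p13P1} (with its smooth template Proposition~\ref{p12P1}), no limit cycle bifurcates from $\Gamma_r$ where $M_1(r)\neq0$, while at each isolated positive zero $r_0$ of $M_1$ across which $M_1$ changes sign exactly one limit cycle bifurcates and tends to $\Gamma_{r_0}$ as $\varepsilon\to0$. Since $M_1=-4f_o$, this already matches the isolated positive zeros of $f_o$ one--to--one in the generic (simple or odd--order) case.

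The delicate point, which I expect to be the main obstacle, is to obtain the \emph{exact} equality rather than a mere upper bound: to handle zeros of $f_o$ that need not be simple and to make the correspondence global for small $\varepsilon\neq0$. I would settle this by exploiting that the perturbation moves only the switching line and leaves the two fields untouched, so the trajectory arcs are still the fixed parabolas. A periodic orbit must start at some $A=(a,\varepsilon f(a))$ with $a>0$, follow the upper parabola to $B=(b,\varepsilon f(b))$ with $a^2-b^2=\varepsilon\big(f(b)-f(a)\big)$, and then the lower parabola to $C=(c,\varepsilon f(c))$ with $c^2-b^2=\varepsilon\big(f(c)-f(b)\big)$; imposing closure $c=a$ forces $f(a)=f(b)$ and $b=-a$, that is $f_o(a)=0$, while conversely $f_o(a)=0$ produces a closed orbit with $b=-a$. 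Hence for every small $\varepsilon\neq0$ the periodic orbits correspond exactly to the positive zeros of $f_o$, so isolated zeros give isolated periodic orbits (limit cycles) with no multiplicity caveat; and when $f_o\equiv0$ (i.e. $f$ even) the reversibility $(x,y,t)\mapsto(-x,y,-t)$ is preserved, the whole annulus persists, and no limit cycle appears.
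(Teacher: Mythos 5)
Your proposal is correct, and its first half is precisely the paper's own proof: the paper also specializes \eqref{p143} to $H^+=-y-x^2$, $H^-=y-x^2$ with $p(r)=r$, $p_1(r)=-r$, obtaining $M_1(r)=2\left[f(-r)-f(r)\right]$ (which it then writes as $4f_o(r)$, a harmless sign slip --- your $-4f_o(r)$ is the correct evaluation, and the zero sets agree either way). Where you genuinely diverge is the counting step. The paper stays inside the Melnikov machinery: it supplements $M_1$ with the computation $M_2(r)=\frac{f'(r)}{r}M_1(r)$ from \eqref{p144}, so that $M_1\equiv 0$ forces $M_2\equiv 0$ (covering the even-$f$ case), and then simply asserts that the number of limit cycles equals the number of positive zeros of $f_o$ --- an assertion which Proposition \ref{p13P1} alone does not fully deliver, since at a zero of even multiplicity sign-change arguments give only upper bounds, not exact existence. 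Your closure argument supplies exactly what is missing, by a more elementary route the paper does not take: since the perturbation moves only the switching curve, $H^{\pm}$ remain exact first integrals in their zones, and matching their values at the crossing points forces $f(a)=f(b)$ and $b=-a$, so for each fixed small $\varepsilon\neq 0$ the periodic orbits are in bijection with the positive zeros of $f_o$; isolated zeros then give limit cycles irrespective of multiplicity, and $f_o\equiv 0$ gives a persistent annulus with no second-order analysis needed. What the paper's route buys is uniformity with the general framework of Sections 3--4 plus stability information through $M_1$ and $M_2$; what yours buys is the exact equality actually claimed in the statement. One point you should make explicit: your argument presumes each periodic orbit crosses $y=\varepsilon f(x)$ exactly twice; for orbits in a compact subannulus and $\varepsilon$ small this holds because each trajectory arc meets the switching curve where $x^2+\varepsilon f(x)$ takes a prescribed value, and $2x+\varepsilon f'(x)$ has the sign of $x$ for $|x|$ bounded away from $0$, so there are exactly two such crossings.
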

   \begin{proof} From (\ref{p143}) and (\ref{p144}), we get 
   \begin{align}
   M_1(r)=& 2[f(-{r})-f({r})]=4f_o({r}),~~ \mbox{and}\nonumber \\
   M_2(r)=& \dfrac{f'({r})}{{r}}M_1(r).\nonumber
   \end{align}
   Therefore the number of limit cycles bifurcated from period annulus of the unperturbed system is same as the number of positive roots of $y=f_o(x)$. In particular, if $f$ is an even function, then no limit cycles bifurcated.\end{proof}
   \begin{remark}
   	From Proposition \ref{p13P1}, it is clear that the limit cycle of system (\ref{p1500}) through the point $({r},0)$  is stable if and only if $\dfrac{dM_1}{dr}=\dfrac{-2}{{r}}{f_e}'({r})<0$, where $f_e(x)=\dfrac{f(x)+f(-x)}{2}.$
   \end{remark}
   \par Since the system (\ref{p1500}) is the normal form of planar piecewise smooth systems of the center-center type, Proposition (\ref{p15P2}) also holds for the systems (\ref{p151}).
\subsection {Boundary perturbation of saddle-center type system} 
In \cite{zou2018piecewise,zou2019piecewise}, authors studied the number of limit cyclce alongwith stability and hyperbolicity  of limit cycles of  the system
   \begin{align}\label{p153} \dot{X}=\begin{cases}
 (y-a, x)& \mbox{if}~y>\varepsilon f(x) \\
 (-y,x)& \mbox{if}~y<\varepsilon f(x)
 \end{cases}.\end{align} Origin is singularity of this system of saddle-center type  and $f(x)=\sin x$ or $f(x)=x(x^2-x_1^2)(x^2-x_2^2)...(x^2-x_m^2)$. Also, note that this system is piecewise linear Hamiltonian system with two zones.
 We study the number of limit cycles bifurcated from the period annulus  and stability of above systems if we change the switching manifold $y=0$ to $y=\varepsilon f(x)$, under the assumption that $f$ is sufficiently smooth.  
Using dialtion $x=au, y=av$ and renaming the variables $u$ and $v$ by $x$ and $y$ respectively, system (\ref{p151}) becomes
  \begin{align}\label{p154}
  \dot{X}=&\begin{cases}
  (y-1,x),~~~~ y>\varepsilon f(ax)\\
  (-y,x), ~~~~~~~y<\varepsilon f(ax)
  \end{cases}.
  \end{align}
 At $\varepsilon=0$, system (\ref{p153}) has a period annulus  $\displaystyle \mathcal{A}=\bigcup_{r\in [0,1]}(\Gamma_r^+\cup \Gamma_r^-)$, where \begin{align}\Gamma_r^+:H^+(x,y)=\dfrac{(y-1)^2}{2}-\dfrac{x^2}{2}=\frac{r}{2}, y\geq 0 ~~ \mbox {and}~~ \Gamma_r^-:H^-(x,y)=\dfrac{x^2}{2}+\dfrac{y^2}{2}=\frac{s}{2}, y\leq 0. \nonumber
\end{align} 
Now for any point $P(r)=(p(r),0), r\in [0,1]$  we have
\begin{align}H^+(P) =\dfrac{r}{2}=H^-(P)=\dfrac{s}{2}\Rightarrow \dfrac{(0-1)^2}{2}-\dfrac{(p(r))^2}{2}=\dfrac{r}{2}=-\dfrac{(p(r))^2}{2}-\dfrac{0^2}{2}=\dfrac{s}{2}\Rightarrow p(r)=\sqrt{1-r}=\sqrt{s}. \nonumber\end{align} Since the trajectories are symmetric about the $y$-axis, we have
$p_1(r)=-p(r)=-\sqrt{1-r} $. Melnikov functions for (\ref{p153}) are given by the following proposition.

 \begin{figure}[h]
	\centering
	\begin{subfigure}{.4\textwidth}
		\includegraphics[width=.45\linewidth]{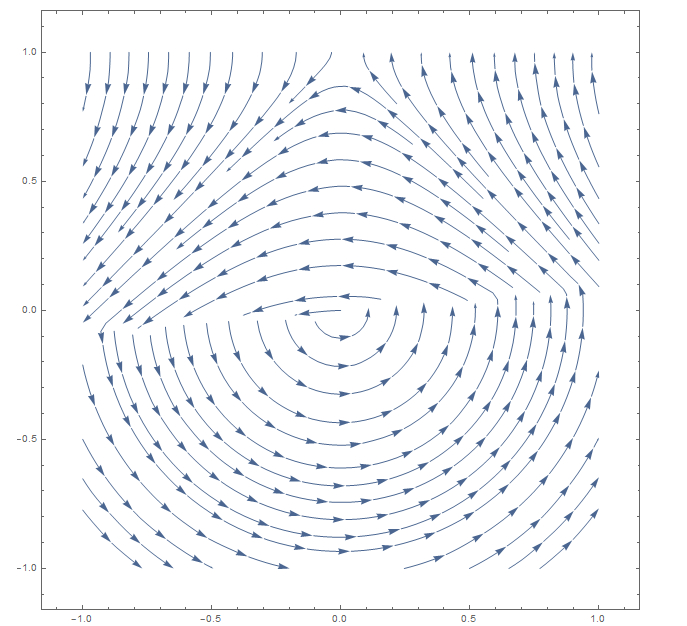}
		\caption{ $f(x)=0.01 \sin x$}
		\label{fig52a}
	\end{subfigure}%
	\begin{subfigure}{.4\textwidth}
		\includegraphics[width=.4\linewidth]{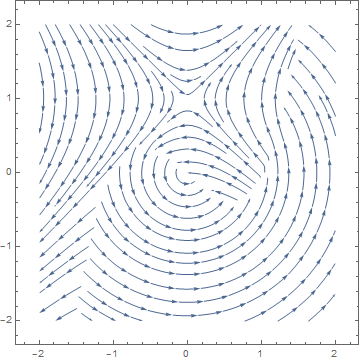}
		\caption{ $f(x)=x(x^2-1)(x^2-4)$}
		\label{fig52b}
	\end{subfigure}%
	
	\caption{Saddle-center system with perturbation boundary $y=\varepsilon f(x)$ }
	\label{fig3}
\end{figure}

\begin{pr}\label{p15P3}
	Suppose that $f:\R\rightarrow \R$ is $C^1$ smooth function. Then we have the following:
	\begin{enumerate} 
		\item The first order Melnikov function for the system (\ref{p153}) is given by $$M_1(r)=2f_o(a\sqrt{1-r}),$$ where  $f_o(x)=\dfrac{f(x)-f(-x)}{2}$ for $0<r<1$. Further, if $M_1(r)\equiv 0$, then $M_2(r)\equiv 0$.
		
		\item The number of limit cycles bifurcated from period annulus 
	around the origin inside the homoclinic orbit containing the saddle point $(0,a)$, is the number of isolated positive roots of $f_o(x)$. In particular, if $f$ is an even function, then no limit cycles bifurcated from the period annulus.
	\end{enumerate}
\end{pr}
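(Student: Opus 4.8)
The plan is to apply the boundary-perturbation Melnikov formulas \eqref{p143} and \eqref{p144} (which specialize Theorem \ref{p13T1}) to the dilated system \eqref{p154}, whose switching boundary is $y=\varepsilon f(ax)$, so that the boundary function playing the role of $f$ in those formulas is $\tilde f(x):=f(ax)$. First I would record the derivatives of the two Hamiltonians $H^+(x,y)=\frac{(y-1)^2}{2}-\frac{x^2}{2}$ and $H^-(x,y)=\frac{x^2+y^2}{2}$, namely $H_x^+=-x,\ H_y^+=y-1,\ H_{xx}^+=-1,\ H_{yy}^+=1$ and $H_x^-=x,\ H_y^-=y,\ H_{xx}^-=1,\ H_{yy}^-=1$, and evaluate them at $P=(\sqrt{1-r},0)$ and $P_1=(-\sqrt{1-r},0)$. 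The structural facts to flag at the outset are the reflection symmetry $p_1(r)=-p(r)$ together with the evenness of both Hamiltonians in $x$; in particular $\lambda=H_x^-(P)/H_x^+(P)=-1$, and the compatibility hypothesis $H_y^{\pm}(P)=H_y^{\pm}(P_1)$ of the boundary-perturbation theorem holds because $H_y^+\equiv-1$ and $H_y^-\equiv0$ along the $x$-axis. This legitimizes the use of \eqref{p143} and \eqref{p144}.

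For the first claim of (1), I would substitute these values into \eqref{p143}. Since $H_y^-(P_1)=0$, only the $H_y^+/H_x^+$ term survives, and the algebra collapses to $M_1(r)=2f_o\!\left(a\sqrt{1-r}\right)$, the overall sign being immaterial for the zero count. The implication $M_1\equiv0\Rightarrow M_2\equiv0$ I would obtain from a parity argument rather than a brute-force evaluation of \eqref{p144}. Note that $M_1\equiv0$ on $(0,1)$ is equivalent to $f_o\equiv0$ on $(0,a)$, i.e. to $\tilde f$ being even on the relevant interval. In that case $\tilde f(p)=\tilde f(p_1)$, so the first two (non-integral) terms of \eqref{p144} vanish, and moreover $\sigma=\frac{1}{H_x^+(P_1)}\int_{\Gamma_r^+}\omega_1^+=\frac{-(\tilde f(p)-\tilde f(p_1))}{H_x^+(P_1)}=0$, killing the last term as well.

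For each of the remaining integrals I would parametrize $\Gamma_r^{\pm}$ by $x$ over the symmetric interval $[-\sqrt{1-r},\sqrt{1-r}]$, using that on these arcs $y=y(x)$ is even in $x$: the hyperbola branch $y=1-\sqrt{r+x^2}$ for $\Gamma_r^+$ and the circular arc $y=-\sqrt{1-r-x^2}$ for $\Gamma_r^-$. Then every integrand is a product of even factors ($\tilde f$, $\tilde f^2$, $H_{yy}^{\pm}$, $1/H_y^{\pm}$) with an odd factor arising from $d(H_y^{\pm})=y'(x)\,dx$ or from $d(\tilde f^2/2)=\tilde f\tilde f'\,dx$; hence each integrand is odd in $x$ and integrates to zero over the symmetric interval, giving $M_2\equiv0$. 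This step is the main obstacle: a direct expansion of \eqref{p144} looks forbidding, and the whole argument hinges on recognizing that the reflection $x\mapsto-x$ — under which both Hamiltonians and (when $M_1\equiv0$) the boundary function are invariant — renders every surviving integrand odd. Getting the parity bookkeeping right for the $\frac{1}{H_y^{\pm}}\,d(H_y^{\pm})$ and $d(\tilde f^2/2)$ pieces is the delicate part; everything else is routine substitution.

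For (2), I would invoke Proposition \ref{p13P1}, by which the isolated zeros of $M_1$ govern the bifurcating limit cycles. The substitution $u=a\sqrt{1-r}$ is a decreasing $C^1$ diffeomorphism of $(0,1)$ onto $(0,a)$, the interval cut out on the positive $x$-axis by the homoclinic loop through $(0,a)$; hence the isolated zeros of $M_1(r)=2f_o(u)$ correspond bijectively, and with equal multiplicity, to the isolated positive zeros of $f_o$ inside that loop. By Proposition \ref{p13P1} each such sign-changing zero produces a limit cycle (exactly one when the zero is simple), while no limit cycle arises away from the zero set; the totally degenerate case $M_1\equiv0$ is exactly the one settled in (1), where $M_2\equiv0$ prevents any bifurcation. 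Therefore the number of limit cycles equals the number of isolated positive roots of $f_o$, and when $f$ is even $f_o\equiv0$ so none bifurcate.
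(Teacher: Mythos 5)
Your proposal is correct, and its overall skeleton matches the paper's: both obtain $M_1$ by direct substitution into \eqref{p143} (your $\lambda=-1$ versus the paper's implicit $\lambda=1$ traces to the sign ambiguity in the paper's choice of $H^-$ for the lower field $(-y,x)$, and as you note it is immaterial for zero counting), and both settle item (2) by transporting isolated zeros of $f_o$ on the interval $(0,a)$ cut out by the homoclinic loop through the decreasing diffeomorphism $r\mapsto a\sqrt{1-r}$ and invoking Proposition \ref{p13P1}. Where you genuinely diverge is the step $M_1\equiv 0\Rightarrow M_2\equiv 0$. The paper evaluates \eqref{p144} outright and obtains the proportionality $M_2(r)=\frac{-f'(a\sqrt{1-r})}{\sqrt{1-r}}\,M_1(r)$ for \emph{arbitrary} $f$, justifying the disappearance of the second-derivative terms by the claim ``$H_{xx}^{\pm}=H_{yy}^{\pm}=H_{xy}^{\pm}=0$'' --- which is literally false for these quadratic Hamiltonians (the second derivatives are $\pm 1$); what actually happens is that the combinations $\frac{H_{yy}^+(P_1)}{H_x^+(P_1)}-\frac{H_{yy}^-(P_1)}{H_x^-(P_1)}$ and $H_{xx}^-(P_1)-\frac{H_x^-(P_1)}{H_x^+(P_1)}H_{xx}^+(P_1)$ cancel. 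Your parity argument --- evenness of $\tilde f$ once $f_o\equiv 0$, hence vanishing of the endpoint terms and of $\sigma$, plus oddness in $x$ of every surviving integrand over the symmetric interval --- sidesteps that dubious justification entirely and is the cleaner route. The trade-offs: you get only the implication under the hypothesis $M_1\equiv 0$, not the explicit proportionality formula (which the paper can reuse, e.g.\ in the stability remark following the proposition); and on $\Gamma_r^-$ the integrals $\int \frac{f}{H_y^-}\,d(H_y^-)$ and $\int \frac{f^2}{H_y^-}\,d(H_y^-)$ are improper at the endpoints where $H_y^-=y$ vanishes, so your odd-integrand cancellation should be read as a symmetric (principal-value) cancellation --- a looseness the paper's own manipulations (cf.\ its extended-logarithm remark after Corollary \ref{p14C2}) share. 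Finally, your treatment of (2) is more explicit than the paper's one-line ``follows from the expression for $M_1$,'' though, like the paper, you implicitly count only odd-multiplicity (sign-changing) isolated zeros as each yielding exactly one limit cycle.
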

\begin{proof} 
From (\ref{p143}), we have
 \begin{align}
 M_1(r)=\dfrac{(-p(r))(-p_1(r))}{(-p(r))}\left(\dfrac{-1}{(-p_1(r))}-\dfrac{0}{(-p_1(r))}\right)(f(ap(r))-f(ap_1(r)))=f(a p(r))-f(-a p(r)).\nonumber
 \end{align}
Therefore 
$M_1(r)=2f_0(ap(r))=2f_0(a\sqrt{1-r})$ for all $r\in [0,1]$.\\
Here, $H_{xx}^{\pm}=H_{yy}^{\pm}=H_{xy}^{\pm}=0$. Therefore from (\ref{p144}), we get $M_2(r)=\dfrac{-f'(a\sqrt{1-r})}{\sqrt{1-r}}M_1(r),r\in [0,1).$ Hence the proof of (1).
\par Proof of (2) follows from the expression for $M_1$. \end{proof}
\begin{remark}
	From Proposition \ref{p13P1}, it is clear that the limit cycle of (\ref{p154}) through the point $(\sqrt{1-r},0)$  is stable if and only if $\dfrac{dM_1}{dr}=\dfrac{-2a}{\sqrt{1-r}}{f_e}'(\sqrt{1-r})<0$, where $f_e(x)=\dfrac{f(x)+f(-x)}{2}.$
\end{remark}

Similarly we can characterize all planar piecewise smooth differential systems having saddle-center type as stated in the following proposition.

\begin{pr}\label{p15P4}
	If the Filippov system \begin{align}\label{p155}\dot{X}=\begin{cases}
	(F^+,G^+), ~y>0\\ (F^-,G^-), ~y<0
	\end{cases}\end{align} has a period annulus around the origin inside the homoclinic orbit containing the saddle point $(0,a), a>0$, then there is a homeomorphism which maps the period annulus of (\ref{p153}) to the period annulus of (\ref{p155}) and maps switching manifold to switching manifold. 
\end{pr}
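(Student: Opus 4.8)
The plan is to build the homeomorphism fiberwise over the family of periodic orbits, in direct parallel to the center--center construction of Proposition~\ref{p15P1} in \cite{wei2016normal}, but now matching a saddle--center topological type. First I would record the common combinatorial picture of the two period annuli. In each system the positive $x$-axis segment $L_+$ lying inside the annulus is a global cross-section that every periodic orbit meets exactly once and transversally (at $y=0$ one has $\dot y=x\neq 0$ on $L_+$, and the annulus consists of crossing orbits), so the orbits are labelled monotonically by their intercept with $L_+$. For \eqref{p153} this label runs over $p(r)=\sqrt{1-r}$, $r\in(0,1)$; for \eqref{p155} it runs over an analogous interval $(0,x^\ast)$, where $x^\ast$ is the positive $x$-intercept of the bounding homoclinic loop. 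I would fix an increasing homeomorphism $\phi_+$ between these two labelling intervals. This is the orbit-to-orbit correspondence, and it already pins down where each orbit, hence each of its two $x$-axis intercepts, must be sent; the negative intercepts are matched automatically via the upper half-orbit map, inducing the corresponding homeomorphism on $L_-$.

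Second, I would promote this orbit correspondence to a map of the full annuli by parametrising points along orbits. Each periodic orbit is cut by the switching line into an upper arc in $\{y>0\}$, governed by the saddle field $(F^+,G^+)$ (resp.\ $(y-a,x)$), and a lower arc in $\{y<0\}$, governed by the center field $(F^-,G^-)$ (resp.\ $(-y,x)$). On each arc I would use the normalised flow time $t\in[0,1]$, running from the $L_+$-intercept to the $L_-$-intercept on the upper arc and back on the lower arc, and then declare the homeomorphism to send the point at normalised time $t$ on an orbit of \eqref{p153} to the point at the same $t$ on the $\phi_+$-corresponding orbit of \eqref{p155}, treating the two arcs separately. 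Since the arc-endpoints $t=0,1$ are exactly the two $x$-axis intercepts, this map carries $L_+$ to $L_+$ and $L_-$ to $L_-$, i.e.\ it sends the switching manifold $\{y=0\}$ to itself; by construction it also carries orbits to orbits and the center at the origin to the center at the origin. The two arc-definitions agree on $L_+$ and $L_-$ because both send an intercept to the intercept prescribed by $\phi_+$, so the fibered map is well defined and continuous across the switching line despite the discontinuity of the Filippov field there.

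Third, I would verify that this fibered map is a genuine homeomorphism on the open period annulus. Continuous dependence of solutions on initial conditions makes the within-arc parametrisations vary continuously with the orbit label on every compact subannulus bounded away from both ends, so the map and its inverse are continuous on the open annulus; monotonicity of $\phi_+$ and of the normalised-time parametrisations gives bijectivity. Up to this point the argument is essentially the same as in the center--center case of Proposition~\ref{p15P1}.

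The hard part will be the behaviour at the two ends of the annulus, and this is exactly where the saddle--center case departs from the center--center one. At the inner end the orbits collapse onto the center, so the circular fibers shrink to the single point $(0,0)$; I would control the diameter of the orbits as the label tends to $0$ (resp.\ $r\to1$) to confirm the construction extends continuously by fixing the origin. At the outer end the orbits limit onto the homoclinic loop through the saddle $(0,a)$: there the period diverges and the flow stalls at the saddle, so parametrisation by flow time degenerates precisely at that one point. I would handle this by checking that the normalised parameter $t\in[0,1]$ still yields a well-defined continuous parametrisation of the limiting loop, with the saddle occupying a definite value of $t$ on the upper arc, and that the fibered map therefore extends continuously to the two boundary loops, which correspond because each is a saddle-separatrix arc closed up by a center arc. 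This near-saddle continuity, rather than the interior construction, is the only genuinely delicate step; away from it the same transversality and continuous-dependence arguments that prove Proposition~\ref{p15P3} suffice.
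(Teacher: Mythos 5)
Your construction is correct for what the proposition actually asserts, but it takes a genuinely different route from the paper. The paper does not use flow time at all: following \cite{wei2016normal} it assumes the periodic orbits of each half-system are convex, passes to polar coordinates so that the upper family becomes the solution family $\xi(\theta,r_0)$ of $\frac{dr}{d\theta}=P^+(r,\theta)$, and defines $\Phi^+(\theta,r)=(\xi(\theta,r)\cos\theta,\xi(\theta,r)\sin\theta)$; the half-annulus maps $\mathcal{H}^{\pm}=\Psi^{\pm}\circ\chi^{\pm}\circ(\Phi^{\pm})^{-1}$ are then diffeomorphisms (the orbit labels are matched by the linear rescaling $\chi^{\pm}$ rather than your arbitrary increasing homeomorphism $\phi_+$), glued along $y=0$ via the Filippov crossing condition. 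The essential difference is the fiber parameter: the paper's polar angle is a geometric parameter that does not degenerate at the saddle, which is precisely why the paper never confronts the issue you single out as the hard step, while your normalized flow time avoids the convexity hypothesis and the polar-graph representation entirely, at the cost of yielding only a homeomorphism (the paper claims a diffeomorphism) and of degenerating near the homoclinic loop.

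On that last point, your final paragraph contains a false claim. The normalized time parameter does \emph{not} extend to a continuous parametrization of the limiting loop with "the saddle occupying a definite value of $t$": since the transit time near the saddle diverges (logarithmically in the distance to the loop) while the transit time along any arc bounded away from the saddle stays bounded, for every fixed $t\in(0,1)$ the point at normalized time $t$ on $\Gamma_r$ converges to the saddle as $\Gamma_r$ approaches the homoclinic loop; in the limit, $t=0$ and $t=1$ go to the $x$-axis intercepts and the entire open interval collapses onto the saddle, so no homeomorphic extension to the closed outer boundary can be obtained from this parametrization (you would have to re-parametrize, e.g.\ by arclength or by the paper's polar angle). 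Fortunately the step is unnecessary: the period annulus is the open union of the periodic orbits — neither the homoclinic loop nor the center is a periodic orbit — and your compact-exhaustion argument already produces a homeomorphism of the open annuli carrying the switching line to the switching line, which is all the statement requires; note that the paper's own map $\mathcal{H}$ is likewise defined only on $U^+\cup U^-$ together with the open axis segments, not on the closed annulus. With that incorrect (and superfluous) boundary claim deleted, your proof stands.
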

\begin{proof} 
Let $ U^+ $ be an open region in the upper half plane lying inside the  homoclinic connection  containing the saddle point $(0,a)$  of
\begin{align}\label{p156} \dot{X}=(F^+, G^+).\end{align} Let  $(b,0)$ and $(c,0)$ be the points of intersection of the homoclinic orbit of (\ref{p156}) with the $x$-axis and $b<0<c$. We may assume that the periodic orbits of (\ref{p156}) are convex (see \cite{wei2016normal}). Hence we use the polar co-ordinates $(r,\theta)$
to transform  the system  (\ref{p156}) into \begin{align}\label{p158} \frac{dr}{d\theta}=P^+(r,\theta).\end{align}
Let $0<r_0<c$ and $0<\theta <\pi$. Consider the initial value problem $\dfrac{dr}{d\theta}=P^+(r,\theta), r(0)=r_0$. Let $\xi= \xi(\theta, r_0)$ be its solution. 
Now define the function $\Phi^+:[0,\pi]\times [0,c]\rightarrow U^+$ by \begin{align}\label{p159} \Phi^+(\theta,r)=(\xi(\theta,r)\cos\theta, \xi(\theta,r)\sin\theta).\end{align} Then $\Phi ^+$ is a diffeomorphism and maps each horizontal line segment $r=r_0$ to the trajectory $\xi=\xi(\theta , r_0)$ of (\ref{p156}). 

\par Similarly, if $V^+$ denote the region in the upper half plane occupied by the periodic orbits of 
\begin{align}\label{p1510} \dot{X}=(y-1, x),\end{align} then we have the
diffeomorphism $\Psi^+(\theta, r):[0,\pi]\times [0,1]\rightarrow V^+$. Let $\chi^+ :[0,\pi]\times [0,c]\rightarrow [0,\pi]\times [0,1]$ be the map given by $\chi^+(x,y)=(x,y/c)$. Then $\chi^+$ is also a diffeomorphism.
\par The composition $\mathcal{H}^+:=\Psi^+\circ \chi^+ \circ (\Phi^+)^{-1}:U^+\rightarrow V^+$ is a diffeomorphism.
\par Next, let $U^-$ and $V^-$ denote the open regions in lower half plane consisting of orbits of the sysetms \begin{align}\label{p1511} \dot{X}=(F^-,G^-)\end{align}
and
\begin{align}\label{p1512} \dot{X}=(-y,x),\end{align} respectively.
Then we can construct a diffeomorphism 
$\mathcal{H}^-:U^-\rightarrow V^-$ which maps orbits of (\ref{p1511}) to that of (\ref{p1512}).
\par Since the system (\ref{p155}) is Filippov, every point on the switching manifold $y=0$ is a singularity of order one.  Hence, $\displaystyle \lim_{y\rightarrow 0^+}\mathcal{H}^+(x,y)=\lim_{y\rightarrow 0^-}\mathcal{H}^-(x,y)$. \par Now we define the map $$\mathcal{H}:U^+\cup U^-\cup \{(x,0):b<x<0~\mbox{or}~0<x<c\}\rightarrow V^+\cup V^-\cup \{(x,0):0<|x|<1\}$$ by
\begin{align} \mathcal{H}(x,y)=\begin{cases}\mathcal{H}^+(x,y),(x,y)\in U^+ \\
\mathcal{H}^-(x,y),(x,y)\in U^-\\
\displaystyle{\lim_{y\rightarrow 0^+}\mathcal{H}^+(x,y)}=\lim_{y\rightarrow 0^-}\mathcal{H}^-(x,y), y=0 \\
\end{cases}.\end{align}
Note that due to the Filippov convension, $\displaystyle \lim_{y\rightarrow 0^+}F^+(x,y)=\lim_{y\rightarrow 0^-}F^-(x,y)$ and $\displaystyle \lim_{y\rightarrow 0^+}G^+(x,y)=\lim_{y\rightarrow 0^-}G^-(x,y)$, so that $\mathcal{H}$ is continuously differentiable on the switching manifold $y=0$. Therefore $\mathcal{H}$ is a diffeomorphism.
\end{proof}
\par From Proposition (\ref{p15P4}) we conclude that the Proposition (\ref{p15P3}) holds for the system (\ref{p155}).

\section{Concluding Remark}
In this article we found expressions for first order as well as second order Melnikov functions for perturbed planar piecewise smooth Hamiltonian systems. Using Melnikov functions we study limit cycle bifurcations of piecewise smooth Hamiltonian systems due to the perturbation of the switching manifold.
\par This idea could be extended to study limit bifurcation of any piecewise  smooth planar differential system. 
\section*{ }
	\bibliographystyle{elsarticle-num-names-nourl}
	\bibliography{Ref1}

\begin{thebibliography}{10}
\expandafter\ifx\csname url\endcsname\relax
  \def\url#1{\texttt{#1}}\fi
\expandafter\ifx\csname urlprefix\endcsname\relax\def\urlprefix{URL }\fi
\expandafter\ifx\csname href\endcsname\relax
  \def\href#1#2{#2} \def\path#1{#1}\fi

\bibitem{bernardo2008piecewise}
M.~Bernardo, C.~Budd, A.~R. Champneys, P.~Kowalczyk, Piecewise-smooth dynamical
  systems: theory and applications, Vol. 163, Springer Science \& Business
  Media, 2008.

\bibitem{han2012normal}
M.~Han, P.~Yu, Normal forms, Melnikov functions and bifurcations of limit
  cycles, Vol. 181, Springer Science \& Business Media, 2012.

\bibitem{sanders2007averaging}
J.~A. Sanders, F.~Verhulst, J.~A. Murdock, Averaging methods in nonlinear
  dynamical systems, Vol.~59, Springer, 2007.

\bibitem{du2008bifurcation}
Z.~Du, Y.~Li, W.~Zhang, Bifurcation of periodic orbits in a class of planar
  filippov systems, Nonlinear Analysis: Theory, Methods \& Applications 69~(10)
  (2008) 3610--3628.

\bibitem{llibre2015averaging}
J.~Llibre, A.~C. Mereu, D.~D. Novaes, Averaging theory for discontinuous
  piecewise differential systems, Journal of Differential Equations 258~(11)
  (2015) 4007--4032.

\bibitem{de2013limit}
D.~de~Carvalho~Braga, L.~F. Mello, Limit cycles in a family of discontinuous
  piecewise linear differential systems with two zones in the plane, Nonlinear
  Dynamics 73~(3) (2013) 1283--1288.

\bibitem{zou2018piecewise}
C.~Zou, J.~Yang, Piecewise linear differential system with a center-saddle type
  singularity, Journal of Mathematical Analysis and Applications 459~(1) (2018)
  453--463.

\bibitem{zou2019piecewise}
C.~Zou, C.~Liu, J.~Yang, On piecewise linear differential systems with n limit
  cycles of arbitrary multiplicities in two zones, Qualitative Theory of
  Dynamical Systems 18~(1) (2019) 139--151.

\bibitem{buzzi2014birth}
C.~A. Buzzi, T.~de~Carvalho, M.~A. Teixeira, Birth of limit cycles bifurcating
  from a nonsmooth center, Journal de Mathematiques Pures et Appliquees 102~(1)
  (2014) 36--47.

\bibitem{wei2016normal}
L.~Wei, X.~Zhang, Normal form and limit cycle bifurcation of piecewise smooth
  differential systems with a center, Journal of Differential Equations 261~(2)
  (2016) 1399--1428.

\bibitem{liu2010bifurcation}
X.~Liu, M.~Han, Bifurcation of limit cycles by perturbing piecewise hamiltonian
  systems, International Journal of Bifurcation and Chaos 20~(05) (2010)
  1379--1390.

\bibitem{perko2013differential}
L.~Perko, Differential equations and dynamical systems, Vol.~7, Springer
  Science \& Business Media, 2013.

\bibitem{kuznetsov2003one}
Y.~A. Kuznetsov, S.~Rinaldi, A.~Gragnani, One-parameter bifurcations in planar
  filippov systems, International Journal of Bifurcation and chaos 13~(08)
  (2003) 2157--2188.

\end{thebibliography}
\end{document}